\def\pg{\mathhexbox278}
\newcommand{\D}{\ensuremath{\mathcal{D}}}
\newcommand{\loc}{\ensuremath{\text{loc}}}
\newcommand{\mb}[1]{\ensuremath{\mathbb{#1}}}
\newcommand{\N}{\mb{N}}
\newcommand{\R}{\mb{R}}
\newcommand{\Z}{\mb{Z}}
\newcommand{\T}{\mb{T}}
\newcommand{\sgn}{\mathop{\mathrm{sgn}}}
\renewcommand{\d}{\ensuremath{\partial}}
\newcommand{\diff}[1]{\frac{d}{d#1}}
\renewcommand{\div}{\mathop{\mathrm{div}}}
\newfont{\bl}{msbm10 scaled \magstep2}
\newtheorem{theorem}{Theorem}[section]
\newtheorem{lemma}[theorem]{Lemma}
\newtheorem{proposition}[theorem]{Proposition}
\newtheorem{definition}[theorem]{Definition}
\theoremstyle{definition}
\newtheorem{remark}[theorem]{Remark}
\newtheorem{example}[theorem]{Example}
\newcommand{\beq}{\begin{equation}}
\newcommand{\eeq}{\end{equation}}
\newcommand{\col}{\colon}
\newcommand{\FT}[1]{\widehat{#1}}
\newcommand{\dis}[2]{\langle #1 , #2 \rangle}
\newcommand{\notmid}{\mid\kern-0.5em\not\kern0.5em}
\newcommand{\norm}[2]{{\left\| #1 \right\|}_{#2}}
\newcommand{\de}{\delta}
\newcommand{\eps}{\varepsilon}
\newcommand{\vphi}{\varphi}
\newcommand{\la}{\lambda}
\newcommand{\om}{\omega}
\newcommand{\ovl}[1]{\overline{#1}}
\title{Solution concepts, well-posedness, and wave breaking for the Fornberg-Whitham equation}
\author{G\"unther H\"ormann}
\address{G\"unther H\"ormann: Fakult\"at f\"ur Mathematik, Universit\"at Wien, Austria}
\email{guenther.hoermann@univie.ac.at}
\date{\today}
\begin{document}
\pagestyle{plain}
\maketitle	

\begin{abstract} We discuss concepts and review results about the Cauchy problem for the Fornberg-Whitham equation, which has also been called Burgers-Poisson equation in the literature. Our focus is on a comparison of various strong and weak solution concepts as well as on blow-up of strong solutions in the form of wave breaking. Along the way we add aspects regarding semiboundedness at blow-up, from semigroups of nonlinear operators to the Cauchy problem, and about continuous traveling waves as weak solutions. 
 \end{abstract}

\tableofcontents

\section{Introduction and basic set-up}

The intention of this review-type article is to put some of the key mathematical notions and solution results regarding the \emph{Fornberg-Whitham equation} in a perspective with respect to each other and we will thereby also strive to connect two so far largely parallel threads of research, because the same equation has also been studied under the name of \emph{Burgers-Poisson equation}. Neither do we attempt here to elaborate on the history and physics behind this model equation nor can we come anywhere near a complete overview of  mathematical results from the more than 50 years of its analysis. Moreover, our attention was restricted to results from work published at the time of writing and no systematic search of preprints was undertaken.

We discuss here the Fornberg-Whitham equation as it was introduced by Whitham in \cite[Equation (67)]{Whitham:67} as the integro-differential equation at the center of a shallow water wave model that is comparably simple and yet showed indications of wave breaking (see also \cite{Seliger68}). It featured later in Whitham's book \cite{WhithamBook} in a section dedicated to breaking and peaking of waves and   a first systematic numerical study was published by Fornberg and Whitham in \cite[Section 6]{FB78}. 

Let us describe the formal set-up of the Cauchy problem. The wave height is described by a function of one-dimensional space and time $u \col \R \times [0,\infty[ \to \R$, $(x,t) \mapsto u(x,t)$. We will occasionally write $u(t)$ to denote the function $x \mapsto u(x,t)$. Upon rescaling (cf.\ Remark \ref{rem:varFW} below) we may write the equation without explicitly occurring additional model parameters in the form
\begin{equation}
  u_t + u u_x + K*u_x = 0,
  \label{eqn:F-W}
\end{equation}
where the convolution is in the $x$ variable only and $t > 0$. The convolution kernel is $K(x) = \frac{e^{-|x|}}{2}$ and satisfies 
\beq\label{eqn:Kprop}
   K - K'' = \delta, 
\eeq   
which means that $K$ is a fundamental solution of the operator $1 - \d_x^2$. 
In fact, we will occasionally have to interpret Equation \eqref{eqn:F-W} in various weak forms---with distributional, entropy, or mild semigroup solution concepts---which stem from rewriting the left-hand side either as in
\begin{equation}
  \partial_t u + \partial_x \left( \frac{u^2}{2} + K*u \right) = 0
  \label{eqn:F-Wrew}
\end{equation}
or also in the form
\begin{equation}
  \partial_t u + \partial_x \left( \frac{u^2}{2}\right) + K' * u  = 0.
  \label{eqn:F-Wrew2}
\end{equation}

Based on the property \eqref{eqn:Kprop}, Equation \eqref{eqn:F-W} emerged in \cite{FS2004} instead from a system of equations, which can be approached here in reverse direction upon rewriting \eqref{eqn:F-W} as $u_t + u u_x = - K*u_x$ and putting $v := - K * u$. Noting that $v_x = - K * u_x$ and $(1 - \d_x^2) v = - u$ we then obtain the following system of nonlinear partial differential equations
\begin{align*}
     u_t + u u_x &= v_x,\\
     v_{xx} &= v + u.
\end{align*}
It was the starting point of the model in \cite{FS2004} and called \emph{Burgers-Poisson system}, while the analog of Equation  \eqref{eqn:F-W} derived from it got named \emph{Burgers-Poisson equation}. This name was also used in the key publication about global weak solutions in \cite{GNg:16}. In the context of the current review article  we prefer to stay with the notion of Fornberg-Whitham equation referring to \eqref{eqn:F-W}.

We will usually suppose an initial wave profile $u_0 \colon \R \to \R$ to be given and require in addition
\begin{equation}
  u|_{t=0} = u_0.
  \label{ic:F-W}
\end{equation}

\begin{remark}[Rescaled and periodic variants of the Fornberg-Whitham equation]\label{rem:varFW} 
(i)  Note that we followed here in \eqref{eqn:F-W} the sign convention for the convolution term as in \cite[Equation (4)]{FB78}\footnote{It does not agree with all signs in Equation (29) of \cite{FB78}, since that equation contains a sign error with the linear term involving $u_x$.} and \cite[Section 13.14]{WhithamBook}, but have applied a rescaling of the solution values in order to get rid of any additional constant factor in the nonlinear term. Replacing $u(x,t)$ by $-u(x,-t)$ transforms solutions of either sign variant of the equation into solutions for the other convention. Moreover, if $u$ solves \eqref{eqn:F-W} and $\la > 0$ is a constant, then $v := u/\la$ is a solution to 
$$
   v_t + \la v v_x + K * v_x = 0, 
$$
which shows why we could bring the original model equation from \cite{Whitham:67} into the form  \eqref{eqn:F-W}. Such scalings and sign conventions have to be taken into account when comparing results about wave breaking that typically involve also quantitative aspects of the initial wave profile.

\noindent (ii) Formally applying $1 - \d_x^2$ to \eqref{eqn:F-W} produces the third order partial differential equation 
\beq\label{eqn:order3}
     u_t - u_{txx}  - 3 u_x u_{xx} - u u_{xxx} +  u u_x + u_x  = 0.
\eeq
Instead we will stay with the non-local integro-differential equation \eqref{eqn:F-W} or \eqref{eqn:F-Wrew}, because it corresponds to the original model and is also more suitable for the various solution concepts to be discussed.

\noindent (iii) To study spatially periodic waves we change the $x$-domain to the one-dimensional torus group $\T = \R / \Z$ and may identify functions on $\T$ with $1$-periodic functions on $\R$. This also requires an adaptation of the convolution kernel $K$ (cf.\  \cite[Section 3]{GH:18}), which is then given as the $1$-periodic function on $\R$ with $K(x) = (e^x + e^{1-x})/(2 (e-1)) = \frac{\sqrt{e}}{e-1}
 \cosh(x - \frac{1}{2}) $ for $0 \leq x < 1$. Note that $K$ is continuous but not $C^1$ and the derivative $K'$ is not continuous but in $L^\infty$.   
\end{remark}

To simplify the presentation in the context of this review, we will give detailed formulations only for the Fornberg-Whitham equation in the form \eqref{eqn:F-W} and without periodicity assumptions. However, we will occasionally add remarks on the periodic case.

Before discussing  in the following section the main solution concepts that have been employed for the Cauchy problem consisting of \eqref{eqn:F-W} and \eqref{ic:F-W}, let us remark that there are not many conserved quantities for solutions $u$ (of sufficient regularity and with suitable integrability properties). The most obvious one is
\beq\label{consint}
   \forall t \geq 0\col \quad \int_\R u(t,x)\, dx = \int_\R u_0(x)\, dx,
\eeq 
since integrating \eqref{eqn:F-W} with respect to $x$ gives
\begin{multline*}
    \diff{t} \int u(x,t)\, dx = \int \d_t u(x,t) \, dx = - \int \Big(u(x,t) \d_x u(x,t) + (K * \d_x u(.,t))(x) \Big) \, dx\\
    = -  \frac{1}{2} \int \d_x (u(x,t)^2) \, dx - \int \d_x (K * u(.,t))(x) \, dx = - \frac{1}{2} \cdot 0 - 0 = 0.
\end{multline*}
A second conserved quantity is the spatial (real) $L^2$ norm and stems from the skew-symmetry\footnote{The symmetry of $K$ and Fubini's theorem imply $\dis{K * v}{w} = \dis{v}{K * w}$ and an additional differentiation gives skew-symmetry.} of the operator $v \mapsto K' * v = (K * v)'$ on $L^2(\R)$: Multiplying \eqref{eqn:F-W} by $u$, integrating with respect to $x$, and writing $u u_t = \diff{t} (u^2/2)$, $u^2 u_x = \diff{x}(u^3/3)$ we obtain
$$
    0 = \frac{1}{2} \diff{t} \int u(x,t)^2 \, dx + \frac{1}{3} \int \diff{x}(u(x,t)^3)\, dx + \dis{K' * u}{u} =  
    \frac{1}{2} \diff{t} \norm{u(t)}{L^2} +  \frac{1}{3} 0 + 0 =  \frac{1}{2} \diff{t} \norm{u(t)}{L^2},
$$
and thus (see also \cite[Lemma 1]{GLC2018})
\beq\label{consL2}
      \forall t \geq 0\col \quad \int u(x,t)^2 \, dx =  \int u_0(x)^2 \, dx.
\eeq
However,  thanks to the analysis in \cite{Ivanov2005}, the Fornberg-Whitham equation is known to belong to those equations among a class of 3rd order nonlinear dispersive wave equations that are definitely not completely integrable. Therefore, the key methods from geometric theories of infinite-dimensional dynamical systems that are available, e.g., for the Camassa-Holm equation, are not applicable in case of the Fornberg-Whitham equation. 

The structure of this article is as follows: Section 2 is devoted to a discussion and comparison of various strong and weak solution concepts for the Cauchy problem consisting of \eqref{eqn:F-W} and \eqref{ic:F-W}. In Section 3 we summarize the key well-posedness results for strong solutions and on blow-up in finite time in the form of wave breaking, where we also add one aspect of semi-boundedness at blow-up time. Section 4 discusses key results on weak entropy solutions and adds a brief investigation of mild solutions with their relations to the former. The final subsection then focusses on continuous traveling waves in relation to the weak or weak entropy solution concept.

\section{Solution concepts for the Cauchy problem}

\subsection{Strong solutions} 

In pure classical terms, the minimum requirements for a particular function $u \col \R \times [0,\infty[ \to \R$ to count as a global solution of the Cauchy problem consisting of \eqref{eqn:F-W} and \eqref{ic:F-W}, would be like the following:  $u$ possesses first-order partial derivatives in $\R \times\, ]0,\infty[$,  the convolution $K * u_x(.,t)$ is defined on $\R$ for every $t > 0$, Equation \eqref{eqn:F-W} holds pointwise for every $(x,t) \in \R \times\, ]0,\infty[$, and $u(x,0) = u_0(x)$ for all $x \in \R$.  We could instead also consider classical solution for a finite time interval $[0,T[$ instead of $[0,\infty[$ and the adaptations in the conditions described above are obvious. 

\begin{remark} In the context of  partial differential conservation laws the term \emph{classical solution} is also used, e.g., by Dafermos (cf.\ \cite[Section 4.1]{Dafermos2016}), for locally Lipschitz continuous functions $u$ that satisfy the differential equation almost everywhere on the $(x,t)$-domain. We could mimic this here with bounded and locally Lipschitz continuous functions, since $K' \in L^1(\R)$ so that the convolution $K' * u$ is defined. This lies somewhat between typical weak solution concepts and what we will call strong solution below.
\end{remark}

A somewhat restrictive, but more systematic and modern approach is to first identify some topological multiplicative algebra $X$ of functions on the real line that is invariant under differentiation $\d_x$ and such that the operator of convolution with $K$ acts continuously $X \to X$. A standard example is $X = H^\infty(\R)$. 
Assuming $u_0 \in X$, one then searches for a solution on $[0,T[$ of \eqref{eqn:F-W} and \eqref{ic:F-W} in the sense\footnote{differentiability of a map $v \col ]0,T[ \to X$ at $t \in\, ]0,T[$ simply meaning that the difference quotient $(v(t_1) - v(t))/(t_1-t)$ converges to a limit $v'(t)$ in $X$ as $t_1 \to t$; and $v \in C^1(]0,T[,X)$ then requires that $v$ is differentiable at every $t \in\, ]0,T[$ and $t \mapsto v'(t)$ is continuous $]0,T[\, \to X$} that $u \in C^1(]0,T[, X) \cap C([0,T[, X)$ should satisfy $u(0) = u_0$ and \eqref{eqn:F-W} holds as an equation in $X$ for $0 <  t < T$, i.e., 
\beq\label{strongeqn}
     \forall t \in \R, 0 < t < T\col \quad u'(t) + u(t) \d_x u(t) + K * \d_x u(t) = 0.
\eeq
In cases where $T$ may be taken arbitrarily large we speak of a solution \emph{global in time}. 

\begin{remark} For any $u$ satisfying Equation \eqref{strongeqn} in the above sense we have $u'(t) = - u(t) u_x(t) - K * u_x(t)$ for all $t > 0$, where the right-hand side belongs to $C([0,T[,X)$. Therefore, $u' = \d_t u$ can be continuously extended to $t = 0$ and we may thus specify  $u \in C^1([0,T[, X) \cap C([0,T[, X)$ from the outset.
\end{remark}

The required invariance of $X$ under differentiation makes it hard to obtain $X$ itself as a Banach algebra, but an alternative is to resort to a scale of Banach spaces $X^s$ ($s \in [0,\infty[$) with $X^{s_2} \hookrightarrow X^{s_1}$, if $s_1 \leq s_2$ and differentiation being continuous $X^{s+1} \to X^s$. The standard examples are Sobolev-type spaces, in particular, $X^s = H^s(\R)$ with $X^0 = L^2(\R)$. In the latter case, we also know that we obtain a Banach algebra, if $s > 1/2$ (and we adapt the Sobolev norm by an appropriate constant factor; cf.\ \cite[Theorem 4.39]{AF:03}). Observe that moreover,  $v \mapsto K*v$ is a continuous operator on $H^s(\R)$ for every $s \geq 0$, since $K \in L^1(\R)$ (hence $\FT{K * v} = \FT{K} \cdot \FT{v}$ with $\FT{K}$ continuous and bounded).  
Therefore, the Sobolev spaces $H^s(\R)$ (with $s  >  1/2$) provide an example of the following set-up.

Suppose $s_0 \geq 0$ and $X^s$ ($s > s_0$) is a scale of Banach algebras of function spaces on $\R$, $X^{s_2} \hookrightarrow X^{s_1}$ ($s_0 < s_1 \leq s_2$), the product in $X^s$ being pointwise multiplication of functions, and such that convolution by $K$ acts continuously on every space and differentiation is continuous $X^{s+1} \to X^s$. Let $s > s_0$ and $0 < T \leq \infty$. 
A \emph{strong solution} on the time interval $[0,T[$ of the Cauchy problem \eqref{eqn:F-W} and \eqref{ic:F-W} with initial value $u_0 \in X^{s + 1}$ is given by an element $u \in C^1([0,T[,X^s) \cap C([0,T[, X^{s + 1})$ such that $u(0) = u_0$ and \eqref{strongeqn} holds as an equation in $X^{s}$ for $0 <  t < T$.

A typical notion of \emph{well-posedness of the Cauchy problem} \eqref{eqn:F-W} and \eqref{ic:F-W} will require that, given any $u_0 \in X^{s + 1}$, there is some $0 < T \leq \infty$ such that a unique solution $u$ to \eqref{strongeqn} with $u(0) = u_0$ exists in $C^1([0,T[,X^s) \cap C([0,T[, X^{s + 1})$ and that the solution map $u_0 \mapsto u$ is continuous, e.g., for every $T_1 \in\, ]0,T[$ as a map between the Banach spaces $X^{s +1} \to C([0,T_1], X^{s + 1})$, where the norm on $C([0,T_1], X^{s + 1})$  is $\sup_{0 \leq t \leq T_1} \norm{u(t)}{X^{s+1}}$ {\small (the supremum exists, because $[0,T_1]$ is compact, and this was the reason for taking $T_1 < T$)}. A reasonable variant of the notion may speak of \emph{well-posedness on the closed finite time interval} $[0,T_1]$, if the solution exists and is unique in $C^1([0,T_1],X^s) \cap C([0,T_1], X^{s + 1})$ with continuity of $u_0 \mapsto u$ as above. 

Proofs of well-posedness typically establish a so-called \emph{a priori estimate} of $\sup_{0 \leq t \leq T_1} \norm{u(t)}{X^{s+1}} $ in terms of some concrete bounded function of the \emph{life span} $T_1$, the regularity $s$, and $\norm{u_0}{X^{s+1}}$.
In case of well-posedness, the \emph{maximal life span} $T$ associated with a given regularity $s > s_0$ and an initial value $u_0 \in X^{s+1}$ is the supremum of all $T_1 > 0$ such that a (unique) solution exists in $C^1([0,T_1],X^s) \cap C([0,T_1], X^{s + 1})$ with $u(0) = u_0$. To have a unique strong solution global in time thus means that we have the maximal life span $T = \infty$. On the other hand, a situation with finite maximal life span, i.e.,  $T < \infty$, does lead to \emph{blow-up} of the solution in finite time and is also the starting point for discussions of the question of \emph{wave breaking}.  

\medskip

\noindent\textbf{Blow-up of strong solutions and wave breaking:}
Suppose now that for given $s > s_0$ and initial wave profile $u_0 \in X^{s+1}$ we have maximal life span $T < \infty$. Since $u \in C^1([0,T[,X^s)$ and $u \in C([0,T[,X^{s+1})$ at least one of the following two situations has to arise: (a) There is no continuous extension of $t \mapsto u(t)$, $[0,T[\, \to X^{s+1}$ at $t=T$; (b) $t \mapsto u(t)$, $[0,T[\, \to X^s$ cannot be extended as a continuously differentiabe map up to $t = T$. We claim that (a) must hold, i.e.,
\beq\label{nonext}
    \text{the map $t \mapsto u(t)$, $[0,T[\, \to X^{s+1}$, cannot possess a continuous extension to $t = T$},
\eeq 
for we could otherwise extend $u$ to a solution with a life span larger than $T$: Indeed, if \eqref{nonext} is false,
 then $v_0 := \lim_{t \to T} u(t) \in X^{s+1}$ can serve as an initial value with some life time $T_0 > 0$ for a unique solution $v \in C^1([0,T_0], X^s) \cap C([0,T_0],X^{s+1})$. We patch the two solutions $u$ and $v$ into one function $w \in C([0,T+T_0],X^{s+1})$, i.e., $w(t) := u(t)$ ($0 \leq t \leq T$), $w(t) := v(t)$ ($T < t \leq T+T_0$), which obviously solves \eqref{strongeqn} for $t \neq T$ and satisfies $w \in C^1([0,T[\, \cup\, ]T, T+ T_0], X^s)$. We have to show that $w$ is $C^1$ at $t = T$ and also solves \eqref{strongeqn} there. We clearly have $w'(T+) := \lim_{t \downarrow T} w'(t) = v'(0)$. The limit from the left, $w'(T-) := \lim_{t \uparrow T} w'(t)$ also exists, since again by \eqref{strongeqn} we may represent $w'(t)$ in terms of $w(t)$ and $w_x(t)$ and both are continuous at $t = T$ with values in $X^s$ by the negation of \eqref{nonext}. This yields $w'(T-) = - v(0) \d_x v(0) - K * \d_x v(0) = \d_t v(0) = v'(0)$. Thus, $w$ is $C^1$ also at $t = T$ and the validity of \eqref{strongeqn} on all of $ [0,T+T_0]$ follows from continuity of all terms in it. 

The consequence of the finiteness of the life span $T$ expressed in \eqref{nonext}  means that $u(t)$ does not converge in $X^{s+1}$ as $t \to T$. In this generality we do not see how to assess whether $\norm{u(t)}{X^{s+1}}$ stays bounded or we have \emph{blow-up} of the solution at $t = T$ in the sense that
$$
    \limsup_{t \uparrow T} \norm{u(t)}{X^{s+1}} = \infty.
$$

In the most prominent case with the scale of Sobolev spaces $H^s(\R)$ ($s > 1/2$), we have that $H^{s+1}(\R)$ is continuously embedded in the space of bounded $C^1$ functions with bounded derivative (\cite[Chapter IV, \pg3, Theorem 1]{DLV2}). Thus, we obtain that for any strong solution $u \in C^1([0,T[,H^s(\R)) \cap C([0,T[,H^{s+1}(\R))$ the norms $\norm{u(t)}{L^\infty}$ and  $\norm{\d_x u(t)}{L^\infty}$ are finite for every $t \in [0,T[$. We say that \emph{wave breaking} occurs for $u$ at time $T > 0$  (cf.\ \cite[Definition 6.1]{Constantin2011}), if the wave itself remains bounded while its slope becomes unbounded at $t = T$, i.e., 
\beq\label{wavebreaking}
  \sup_{t \in [0,T[} \norm{u(t)}{L^\infty} < \infty \quad \text{and} \quad 
  \limsup_{t \uparrow T} \norm{\d_x u(t)}{L^\infty} = \infty.
\eeq
An \emph{analysis of wave breaking} should ideally address at least the following two issues:\\
(a) Whether a finite maximal life span $T < \infty$ for a strong solution $u$ necessarily implies  wave breaking for this solution at time $T$.\\ 
(b) Identification of a certain class of initial wave profiles $u_0$ such that the maximal life span of the corresponding strong solution is indeed finite. Hence wave breaking does definitely occur for the strong solutions with these initial values.

\subsection{Weak(er) solution concepts}

\subsubsection{Weak solutions} 

We recall how to proceed in the well-known standard policy to obtain an interpretation of the Cauchy problem \eqref{eqn:F-W} and \eqref{ic:F-W} in a weak or distributional sense: Suppose $u$ is a classical solution, write out \eqref{eqn:F-W} in the form \eqref{eqn:F-Wrew} or \eqref{eqn:F-Wrew2}, multiply the equation by an arbitrary test function $\phi$ from the space $\D(\R^2) := C^\infty_\text{c}(\R^2)$ of smooth functions with compact support, and integrate with respect to $x$ over all of $\R$ and with respect to $t$ over the half-line $[0,\infty[$; integration by parts and observing $u(x,0) = u_0(x)$ then yields an integral identity, which reads
$$
      \int_0^\infty \int_{-\infty}^\infty \Big(
    - u(x,t) \partial_t \phi(x,t) - \frac{u^2(x,t)}{2} \partial_x \phi(x,t) +  \big(K' \ast u(.,t)\big)(x) \phi(x,t) \Big) \, dx \, dt  = 
  \int_{-\infty}^\infty u_0(x) \phi(x,0)\, dx.
$$ 
Since $K' \in L^1(\R)$, it does make sense also for measurable functions $u$ and $u_0$ such that $u$ is bounded on $\R \times [0,T]$ for every $T > 0$ and $u_0$ is (locally) bounded. 
\begin{definition}\label{dfn:weaksol}
A measurable function $u \col \R \times [0,\infty[ \to \R$ that is bounded on $\R \times [0,T]$ for every $T > 0$ is called a \emph{weak solution} of the Cauchy problem \eqref{eqn:F-W}, \eqref{ic:F-W} with initial value $u_0 \in L^\infty(\R)$, if 
\begin{multline}\label{weaksol}
  \int_0^\infty \int_{-\infty}^\infty \Big( 
     u(x,t) \partial_t \phi(x,t) + \frac{u^2(x,t)}{2} \partial_x \phi(x,t) -  
    \big(K' \ast u(.,t)\big)(x) \phi(x,t) \Big) \, dx \, dt \\  
  + \int_{-\infty}^\infty u_0(x) \phi(x,0)\, dx = 0
\end{multline}
holds for every test function $\phi \in \D(\R^2)$.
\end{definition}
In the situation of the above definition, let us extend $u$ to a measurable function on all of $\R^2$ by setting $u(x,t) := 0$ for negative $t$. Upon writing $K' * u = \d_x(K * u)$ this leads to the distributional identity 
$$
   \div\nolimits_{(x,t)} (A_1,A_2) = u_0 \otimes \de \quad \text{ on $\R^2$},
$$ 
where $A_1 := u^2/2 + K * u$ and $A_2 := u$.  We  may thus deduce from \cite[Lemma 1.3.3]{Dafermos2016} (similarly as in the discussion in \cite[Section 4.3]{Dafermos2016}) the following, upon possibly modifying $u$ on a set of measure zero: For any relatively compact open subset $B$ of $\R$, the map $t \mapsto u(.,t)|_{B}$ is weak* continuous from $[0,\infty[$ into $L^\infty(B)$. This implies that $u$ may be considered a continuous map 
$$
   [0,\infty[ \to \D'(\R), t \mapsto u(t), 
$$   
where $u(t)$ is defined, for any $t \geq 0$, by its action on test functions $\vphi \in \D(\R)$ as 
$$
   \dis{u(t)}{\vphi} := \int u(x,t) \vphi(x)\, dx. 
$$   
In particular, we obtain that $\lim_{t \to 0} \int_B (u(x,t) - u_0(x)) \phi(x)\, dx = 0$ for any test function $\vphi \in \D(\R)$, i.e.,
$$
      u(t) \to u_0 \text{ in  $\D'(\R)$ as $t\to 0$,}\quad \text{or simply,  $u(0) = u_0$ holds in $\D'(\R)$}, 
$$
which also explains in what sense the initial value is attained for a weak solution according to Definition \ref{dfn:weaksol}. As the following remark illustrates, we cannot hope for a considerably stronger notion of continuity.

\begin{remark}\label{wnots}
In general, even with $B$ compact, the weak* continuity of a map $v \col [0,\infty[ \to L^\infty(B)$ does not imply strong continuity of $v$ as a map into some $L^p(B)$ although $L^\infty(B) \subseteq L^p(B)$. {\small (An example with $B = [0,1]$ is provided by $v \col [0,\infty[ \to L^\infty([0,1])$, where $v(t)(x) := \exp(ix/t)$, if $t > 0$, and $v(0) := 0$: Strong discontinuity of $v$ at $t = 0$ is obvious from $\norm{v(t) - v(0)}{L^p} = 1$ for all $t > 0$;  for any $f \in L^1([0,1])$, continuity of $\dis{v(t)}{f} = \int_0^1 \exp(ix/t) f(x)\, dx$ in $t >0$ is clear; to check weak* continuity of $v$ at $t = 0$, suppose $t_n > 0$, $t_n \to 0$, and let $f$ be arbitrary from the dense subspace $C^1([0,1]) \subseteq L^1([0,1])$; integration by parts gives $\dis{v(t_n)}{f} \to 0$ and, since $\norm{v(t_n)}{L^\infty} = 1$, a standard variant of the Banach-Steinhaus theorem (\cite[Section 3.3, Proposition 2]{Zeidler:95}) yields the pointwise convergence $v(t_n) \to 0 = v(0)$ on all of $L^1([0,1])$, thus the weak* continuity of $v$ at $t = 0$.)} The author would like to use this opportunity to correct a slight slip of argument in a related discussion in \cite[the paragraph below Definition 2.1]{HO:19} for periodic solutions on the torus $\T$. The claim $\lim_{t \to 0} \norm{u(t) - u_0}{L^1(\T)} = 0$ made there is true, but the reasoning rests on the entropy condition, which provides the weak* continuity of $t \mapsto u(t)$ and of $t \mapsto u(t)^2$ (compare with the discussion leading to \eqref{normcont} below) and the Cauchy-Schwarz inequality then yields the upper bound $(\int |u(t) - u_0| dx)^2 \leq  \int |u(t) - u_0|^2 dx = \int u(t)^2 dx - 2 \int u(t) u_0 dx + \int u_0^2 dx \to 0$ as $t \to 0$ and completes the argument.
\end{remark}

A typical phenomenon with nonlinear hyperbolic conservation laws is non-uniqueness of weak solutions to the Cauchy problem, in particular, for the Burgers equation (\cite[Section 4.4]{Dafermos2016}). As the Fornberg-Whitham equation is a non-local linear perturbation of the Burgers equation by the convolution term, it seems plausible that non-uniqueness is an issue\footnote{However, the author does currently know neither a concrete example of non-uniqueness nor of a uniqueness proof for the general weak Cauchy problem.}  there as well.  In any case, guided by the success of entropy (admissibility) conditions on weak solutions for pure partial differential conservation laws, such methods have also been  employed for the Fornberg-Whitham equation. 

\subsubsection{Weak entropy solutions}

Let us write Equation \eqref{eqn:F-Wrew2} as to resemble a scalar balance law, but with a non-local right-hand side as a ``source'',
$$
     \d_t u + \d_x\left(\frac{u^2}{2}\right) = - K' * u .
$$
Introducing an entropy-entropy flux pair $\eta, Q\col \R \to \R$, where $\eta$ is convex and $Q'(z) = \eta'(z) z$ (\cite[Section 3.2]{Dafermos2016}), we obtain for any classical solution,
$\d_t \eta(u) + \d_x Q(u) = \eta'(u) \d_t u + Q'(u) \d_x u = \eta'(u) ( \d_t u + u \d_x u) = - \eta'(u) (K' * u)$, 
thus 
\beq\label{classent}
   \d_t \eta(u) + \d_x Q(u) +  \eta'(u) (K' * u) = 0. 
\eeq
We cannot expect this equation to extend to weak solution as well, but we may note that for any bounded measurable function $u$, the various compositions with $u$ appearing in this equation are defined as locally bounded (Lebesgue) measurable functions: Indeed,   convexity of $\eta$ implies that $\eta$ is (locally Lipschitz) continuous and $\eta'$ is increasing, hence both $\eta$ and $\eta'$ are Borel measurable and locally bounded; furthermore, $z \mapsto Q'(z) = \eta'(z) z$ is measurable and locally bounded, hence also $Q$ is locally Lipschitz continuous; therefore, in all the compositions $\eta \circ u$, $\eta' \circ u$, and $Q \circ u$, the left member is Borel measurable, hence the composition is Lebesgue measurable.

Recall the following notion of \emph{admissibility} for a weak solution to the hyperbolic partial differential conservation law $\d_t u + \d_x f(u) + g(u) = 0$ with entropy-entropy flux pair $\eta$ and $Q$, $Q'(z) = \eta'(z) f'(z)$:  One replaces the differential equation $\d_t \eta(u) + \d_x Q(u) + \eta'(u) g(u) = 0$ for classical solutions, the analog of \eqref{classent}, by the inequality $\d_t \eta(u) + \d_x Q(u) + \eta'(u) g(u) \leq 0$ and requires that it holds in the distributional sense, i.e., $-(\d_t \eta(u) + \d_x Q(u) + \eta'(u) g(u))$ shall be equal to a non-negative measure; it even turns out that this inequality alone already implies that $u$ is a weak solution (see, e.g., \cite[the brief discussion following Definition 6.2.1]{Dafermos2016}). Taking this as a guideline for the Fornberg-Whitham equation and replacing \eqref{classent} accordingly, we will thus consider measurable locally bounded solutions $u$ of the distributional inequality 
\beq\label{weakent}
      \d_t \eta(u) + \d_x Q(u) + \eta'(u) (K' * u) \leq 0
\eeq
and obtain a concept that is compatible with, but in-between, those of weak and of strong solutions. Before implementing this in detail for the Cauchy problem \eqref{eqn:F-W} and \eqref{ic:F-W}, we will simplify matters by a typical reduction in the set of all possible entropies $\eta$ used in the inequality \eqref{weakent}, which is based on the observation that on finite intervals any convex function may be approximated by linear combinations of a linear functions and functions of the form $z \mapsto |z - \la|$ (cf.\ \cite[discussion of Theorem 1.5.1, page 25]{HoermanderConvexity} and \cite[Section 6.2]{Dafermos2016}). Namely, we need to consider only the so-called \emph{Kru\v{z}kov entropy-entropy flux pairs} (\cite[Equation (6.2.6)]{Dafermos2016}) with parameter $\la \in \R$ of the form
\beq\label{ourpair}
   \eta(z) = |z - \la|, \quad Q(z) = \sgn(z - \la) \frac{z^2 - \la^2}{2} = \frac{1}{2} |z - \la| (z + \la).
\eeq
We summarize the discussion so far  in the following solution concept.
\begin{definition}[Intermediate version]\label{defentropysol}
Let $u_0 \in L^\infty(\R)$. 
  A measurable function $u \col \R \times [0,\infty[ \to \R$ that is bounded on $\R \times [0,T]$ for every $T > 0$ is called a \emph{weak entropy solution} of the Cauchy problem \eqref{eqn:F-W} and \eqref{ic:F-W}, if
  \begin{align}
    0 &\leq \int_{0}^{\infty} \int_{-\infty}^{\infty} \Big( |u(x,t) - \lambda| \partial_t \phi(x,t) + \sgn( u(x,t)-\lambda) \frac{u^2(x,t) - \lambda^2}{2}\partial_x \phi(x,t)\\
    &-\sgn(u(x,t)-\lambda) \big( K'*u(\cdot,t) \big) (x) \phi(x,t) \Big) \,dx \,dt + \int_{-\infty}^{\infty} |u_0(x) - \lambda|\phi(x,0)\,dx
     \label{eqn:entropy-sol} 
  \end{align}  
 holds for arbitrary $\lambda \in \R$ and nonnegative test functions $\phi \in \D(\R^2)$.
\end{definition}

\noindent\emph{Entropy solutions are weak solutions:}
It is easy to check that the condition \eqref{eqn:entropy-sol} in Definition \ref{defentropysol}  implies \eqref{weaksol}, since  for any given $\phi$ we may choose $\lambda = -r$ and $\lambda = r$, where $r > 0$ is sufficiently large such that $|u| < r$ holds on the support of $\phi$. Thus, every weak entropy solution is a weak solution of the Cauchy problem.

\begin{remark}  It is equivalent to add  $K' * \la = K * \la' = 0$ in the convolution term of the integral \eqref{eqn:entropy-sol}, i.e., change $\sgn(u(x,t)-\lambda) \big( K'*u(\cdot,t) \big) (x) \phi(x,t)$ to $\sgn(u(x,t)-\lambda) \big( K'*(u(\cdot,t)-\lambda) (x) \big) \phi(x,t)$ there. We note this only to clarify consistency with the formulae mentioned in \cite{HO:19} and \cite{GH2018}.
\end{remark}

Observe that for any weak entropy solution $u$ (with the entropy-entropy flux pair given in \eqref{ourpair}), the term $\eta'(u) (K' * u)$ in \eqref{weakent} is a bounded measurable function, so that $ \d_t \eta(u) + \d_x Q(u)$ is equal to some signed measure. Thus, we may therefore again invoke \cite[Lemma 1.3.3]{Dafermos2016}, but now also with $\eta(u)$ in place of $u$, where $\eta$ is any convex function. In particular, we may choose $\eta$ quadratic and obtain that $t \mapsto u(t)$ and $t \mapsto u(t)^2$ both induce weak* continuous maps from $[0,\infty[$ into $L^\infty(B)$ for any relatively compact open subset $B$. We claim that 
\beq\label{normcont}
    t \mapsto u(t) \text{ is norm continuous } [0,\infty[ \to L^1(B).
\eeq
Let $t_n, t_0 \geq 0$ with $t_n \to t_0$ ($n \to \infty$). We have to show that $v_n := u(t_n)|_B$ converges to $v_0 := u(t_0)|_B$ in $L^1(B)$, which follows directly from the Cauchy-Schwarz inequality and the weak*-convergences $v_n \to v_0$ and $v_n^2 \to v_0^2$, since
$$
    \Big(\int_B |v_n - v_0| dx\Big)^2 \leq \int_B 1^2 dx \cdot \int_B |v_n - v_0|^2 dx = 
      |B| \Big( \int_B v_n^2 dx - 2 \int_B v_n v_0 dx + \int_B v_0^2 dx\Big) \to 0.
$$

The automatic continuity of weak entropy solutions with respect to time expressed in \eqref{normcont} suggests that with an initial value $u_0 \in L^1(\R) \cap L^\infty(\R)$ one might hope to obtain even $u \in C([0,\infty[,L^1(\R))$ for the weak entropy solution. Such a set-up works fine with scalar (partial differential) conservation laws (cf.\ \cite[Chapter VI]{Dafermos2016}) and turns out to be well-suited also for the Cauchy problem of the Fornberg-Whitham equation as demonstrated in \cite{GNg:16}. We therefore adapt Definition \ref{defentropysol} accordingly, in particular, the initial value may then be required to be attained directly in the form $u(0) = u_0$ and need not appear in the integral inequality.
\begin{definition}\label{defentropysol2}
Let $u_0 \in L^1(\R) \cap L^\infty(\R)$. 
  A function $u \in C([0,\infty[, L^1(\R))$ that is bounded on $\R \times [0,T]$ for every $T > 0$ is called a \emph{weak entropy solution} of the Cauchy problem \eqref{eqn:F-W} and \eqref{ic:F-W}, if $u(0) = u_0$ and
\begin{multline}
    0 \leq \int_{0}^{\infty} \int_{-\infty}^{\infty} \Big( |u(x,t) - \lambda| \partial_t \phi(x,t) + \sgn( u(x,t)-\lambda) \frac{u^2(x,t) - \lambda^2}{2}\partial_x \phi(x,t)\\
    -\sgn(u(x,t)-\lambda) \big( K'*u(\cdot,t) \big) (x) \phi(x,t) \Big) \,dx \,dt 
     \label{eqn:entropy-sol2} 
\end{multline}
 holds for arbitrary $\lambda \in \R$ and nonnegative test functions $\phi \in \D(\R \times ]0,\infty[)$.
\end{definition}

\subsubsection{Mild solutions} 

The (inviscid) Burgers equation is just \eqref{eqn:F-W} without the convolution term, in which case an alternative approach is to extend the nonlinear map $v \mapsto \d_x (v^2/2)$, $C^1_c(\R) \to L^1(\R)$, to an accretive operator in $L^1(\R)$ and to show that it generates a continuous semigroup of nonlinear contractions on $L^1(\R)$. In case of an initial value $u_0 \in L^1(\R) \cap L^\infty(\R)$ the concept for the Cauchy problem based on this approach is equivalent to that of a weak entropy solution (cf.\ \cite[Section 6.4]{Dafermos2016} or \cite[Section 5.5]{Barbu}).

Let us recall some of the basic notions from nonlinear operator theory (\cite[Chapter 3]{Barbu}) involved here, but writing it out specifically for the Banach space $L^1 := L^1(\R)$. A general (possibly multi-valued) nonlinear operator $G$ on $L^1$ is defined by a relation $G \subseteq L^1 \times L^1$. The value of $G$ at $u \in L^1$ is defined as the subset $G (u) := \{ v \in L^1 \mid (u,v) \in G\}$, the domain is $D(G) := \{ u \in L^1 \mid G(u) \neq \emptyset\}$, and the range is $R(G) := \bigcup_{u \in D(G)} G(u)$. Thus, $G = \{ (u,v) \mid u \in D(G), v \in G(u) \}$ and in the special case of only single-valued sets $G(u)$ for all $u \in D(G)$ this is the identification of a map with its graph. For $G, F \subseteq L^1 \times L^1$ and $\la \in \R$, we define $\la G := \{ (u,\la v) \mid (u,v) \in G\}$, the sum $G + F := \{(u,v+w) \mid (u,v) \in G, (u,w) \in F\}$, and the composition $G \circ F := \{ (u,w) \mid \exists v \in L^1\col (u,v) \in F \text{ and } (v,w) \in G\}$. We also set $G^{-1} := \{ (v,u) \mid (u,v) \in G\}$. 

A \emph{quasi-accretive} nonlinear operator $G$ on $L^1$ can be characterized by the property that there exists some $\om > 0$ such that  we have for $0 < \la < \frac{1}{\om}$,
$$
   \forall (u_1,v_1), (u_2,v_2) \in G\col \quad  \norm{u_1 - u_2 + \la (v_1 - v_2)}{L^1} \geq  (1 - \la \om) \norm{u_1 - u_2}{L^1},
$$
while $G$ is \emph{accretive}, if $\norm{u_1 - u_2 + \la (v_1 - v_2)}{L^1} \geq  \norm{u_1 - u_2}{L^1}$ holds for some (hence any) $\la > 0$. An accretive operator $G$ is said to be \emph{m-accretive}, if $R(I + G) = L^1$ (where $I$ denotes the identity on $L^1$); a quasi-accretive operator $G$ is \emph{quasi-m-accretive}, if $G + \om I$ is m-accretive for some $\om > 0$. 

A \emph{continuous semigroup of nonlinear operators (respectively, contractions)} on $L^1$ is a family $(S(t)_{t \geq 0}$ of maps $S(t) \col L^1 \to L^1$ such that $S(0) = I$,  $S(t_1 + t_2) = S(t_1) \circ S(t_2)$ for all $t_1, t_2 \geq 0$, the map $t \mapsto S(t)(u_0)$ is continuous $[0,\infty[ \to L^1$ for every $u_0 \in L^1$ (and, in case of contractions, $\norm{S(t)(u_0) - S(t)(v_0)}{L^1} \leq \norm{u_0 - v_0}{L^1}$ holds for all $u_0, v_0 \in L^1$ and $t \geq 0$). The semigroup is said to be generated by the quasi-m-accretive nonlinear operator $G$, if for every $u_0$ in the closure $\ovl{D(G)}$ of the domain of $G$, we have
$$
    S(t)(u_0) = \lim_{n \to \infty} \big( I + \frac{t}{n} G \big)^{-n} (u_0).
$$ 

We are now ready to formulate a solution concept for \eqref{eqn:F-W} and \eqref{ic:F-W} in terms of semigroups. Let $A$ be the generator of the solution semigroup of contractions for the (inviscid) Burgers equation and denote by $B$ the continuous linear convolution operator $L^1(\R) \to L^1(\R)$, $u \mapsto B u := K' * u$ (cf.\ Lemma \ref{prop:K-bounds}).
\begin{definition}  Suppose that $A + B$ is quasi-m-accretive and generates the continuous semigroup  $(S(t)_{t \geq 0}$  on $L^1(\R)$. If $u_0 \in L^1(\R)$, then $u(t) := S(t)(u_0)$ ($t \geq 0$) defines the \emph{mild solution} $u \in C([0,\infty[,L^1(\R))$ of the Cauchy problem \eqref{eqn:F-W} and \eqref{ic:F-W}.
\end{definition}

We recall from \cite[Section 6.4]{Dafermos2016} or \cite[Sections 3.3 and 5.5]{Barbu} that $A$ is given as the closure of the set $A_0 \subseteq L^1(\R) \times L^1(\R)$, where $A_0$ is defined to be the set of all pairs $(u,v) \in L^1(\R) \times L^1(\R)$ with $u^2/2 \in L^1(\R)$ and satisfying
$$
     \int_\R \sgn( u(x)-\lambda) \Big(  \frac{u^2(x) - \lambda^2}{2}\partial_x \vphi(x)  + v(x) \vphi(x)\Big)\, dx \geq 0
$$
for every non-negative $\vphi \in \D(\R)$ and $\la \in \R$.

\section{Strong solutions and wave breaking}

\subsection{Existence and uniqueness of strong solutions for short time}

The basic result on classical smooth solutions with initial and spatial $H^\infty$ regularity was established in \cite[Chapter 2, \pg 2]{NaumShish94}, alongside with the case of smooth periodic solutions in \cite[Chapter 3, \pg 2]{NaumShish94}. The strategy of proof there is successive approximation, starting with $u^{(0)}(x,t) := u_0(x)$ ($x, t \in \R$), in the form 
$$
   \d_t u^{(n)} + u^{(n-1)} \d_x u^{(n)} + K' * u^{(n-1)} = 0, \quad u^{(n)} |_{t=0} = u_0, 
$$
which requires in each step to solve a linear hyperbolic equation for $u^{(n)}$, given $u^{(n-1)}$. Estimates along the characteristics allow then to show convergence of the scheme as well as uniqueness and leads to the following statement, which in particular gives a classical solution.
\begin{theorem} If $u_0 \in H^\infty(\R)$, then there is some $T > 0$ such that the Cauchy problem \eqref{eqn:F-W} and \eqref{ic:F-W} possesses a unique solution $u \in C^\infty([0,T], H^\infty(\R))$.
\end{theorem}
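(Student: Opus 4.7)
The plan is to make the iteration scheme sketched in the text rigorous via standard Picard-type estimates in the Sobolev scale, exploiting the fact that convolution by $K$ (and hence by $K'$) is continuous on every $H^s(\R)$ since $K, K' \in L^1(\R)$. First I would freeze the transport coefficient: given $u^{(n-1)}$ already smooth and bounded in $H^s$ on a common slab $[0,T]$, the linear Cauchy problem
\beq\label{linstep}
   \d_t u^{(n)} + u^{(n-1)} \d_x u^{(n)} = -K' * u^{(n-1)}, \qquad u^{(n)}|_{t=0} = u_0,
\eeq
can be solved classically by the method of characteristics, since $u^{(n-1)} \in C^1$ thanks to Sobolev embedding $H^s \hookrightarrow C^1$ for $s > 3/2$. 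The characteristic flow is smooth, the source $-K' * u^{(n-1)}$ is smooth and bounded, so $u^{(n)}$ inherits smoothness from $u_0$ and from the flow and source.

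Next I would derive energy estimates uniform in $n$ in each space $H^s$ ($s > 3/2$). Applying $\d_x^k$ to \eqref{linstep}, multiplying by $\d_x^k u^{(n)}$, and integrating yields
$$
  \frac{1}{2}\diff{t} \norm{u^{(n)}(t)}{H^s}^2 \leq C_s \Big( \norm{\d_x u^{(n-1)}(t)}{L^\infty} + 1 \Big) \norm{u^{(n)}(t)}{H^s}^2 + C_s \norm{u^{(n-1)}(t)}{H^s}^2,
$$
after using the Kato--Ponce commutator estimate for $[\d_x^k, u^{(n-1)}] \d_x u^{(n)}$, the symmetry-based identity $\int u^{(n-1)} \d_x((\d_x^k u^{(n)})^2)/2\, dx = -\int \d_x u^{(n-1)}\, (\d_x^k u^{(n)})^2 /2\, dx$, and Young's inequality $\norm{K' * v}{H^s} \leq \norm{K'}{L^1} \norm{v}{H^s}$. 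Embedding $H^s \hookrightarrow W^{1,\infty}$ (for $s > 3/2$) and a Gronwall-type bootstrap then produces $T_s > 0$ and a radius $R_s > 0$ such that $\sup_{n} \sup_{t \in [0,T_s]} \norm{u^{(n)}(t)}{H^s} \leq R_s$, provided we start from $\norm{u_0}{H^s} \leq R_s/2$. Since the bound $R_s$ depends on $\norm{u_0}{H^s}$ monotonically, a diagonal argument would show that the \emph{same} $T > 0$ works for all $s$ simultaneously only after first fixing $T$ at the lowest threshold $s_* > 3/2$ and then propagating regularity; this propagation is the technically delicate point.

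For convergence I would estimate $w^{(n)} := u^{(n)} - u^{(n-1)}$ in $L^2(\R)$. Subtracting consecutive iterations,
$$
  \d_t w^{(n)} + u^{(n-1)} \d_x w^{(n)} = - w^{(n-1)} \d_x u^{(n-1)} - K' * w^{(n-1)},
$$
and an $L^2$ energy estimate combined with the uniform $H^{s_*}$ bound gives
$$
   \diff{t} \norm{w^{(n)}(t)}{L^2} \leq C \big( \norm{w^{(n)}(t)}{L^2} + \norm{w^{(n-1)}(t)}{L^2} \big),
$$
so a contraction in $C([0,T], L^2)$ follows after possibly shrinking $T$. The limit $u$ lies in $L^\infty([0,T], H^s)$ for every $s$ by lower semicontinuity of the norm, and an interpolation argument (convergence in $L^2$ plus uniform boundedness in $H^{s+1}$ implies convergence in $H^s$) gives $u \in C([0,T], H^s)$ for every $s$. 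The equation \eqref{eqn:F-W} then yields $\d_t u \in C([0,T], H^{s-1})$, and iterating this observation in both $t$ and $x$, using the equation to trade time derivatives for space derivatives, one gets $u \in C^\infty([0,T], H^\infty(\R))$.

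Uniqueness is obtained by the same $L^2$ difference estimate applied to two strong solutions: if $u_1, u_2$ share the initial datum $u_0$ and both lie in $C([0,T], H^{s_*})$, then $w := u_1 - u_2$ satisfies $\diff{t} \norm{w}{L^2} \leq C \norm{w}{L^2}$ with $w(0) = 0$, hence $w \equiv 0$ by Gronwall. The main obstacle in the whole scheme is the uniform $H^s$ control in Step~2: one has to ensure that the time $T$ obtained at the low-regularity level $s_*$ is not shortened when one propagates $H^s$ bounds to all $s$, which requires the energy estimates to close with the Lipschitz seminorm $\norm{\d_x u^{(n-1)}}{L^\infty}$ controlled only at the level $s_*$, rather than at level $s$.
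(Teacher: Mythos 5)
Your proposal is correct and follows the same successive-approximation scheme that the paper attributes to Naumkin--Shishmarev: freeze the transport coefficient, solve the resulting linear hyperbolic problem, obtain uniform high-norm bounds, and contract in a low norm. The only methodological difference is that the original reference derives its bounds along characteristics while you use Sobolev energy/commutator (Kato--Ponce) estimates; both close because the $H^s$ growth rate is controlled by $\norm{\d_x u^{(n-1)}}{L^\infty}$ plus the harmless bounded convolution term, so the common lifespan $T$ for all $s$ --- the point you rightly flag as delicate --- is governed only by the low-regularity level, exactly as in the standard persistence-of-regularity argument for quasilinear transport.
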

Ten years later, the following unique existence result with initial and spatial $H^{k+1}$ regularity ($k \in \N$, thus $k+1 \geq 2$) was established in \cite[Theorem 4.1]{FS2004}, essentially by deriving a contraction argument for the map $v \mapsto u$, where $u$ solves
$$
   u_t + u u_x = - K' * v, \quad u|_{t=0} = u_0.
$$ 
\begin{theorem} Let $u_0 \in H^{k+1}(\R)$ with $k \in \N$. Then given any $T > 0$, which is smaller than some positive bound depending on $\norm{u_0}{H^{k+1}}$, the Cauchy problem \eqref{eqn:F-W} and \eqref{ic:F-W} is uniquely solvable with $u \in C([0,T],H^{k}(\R)) \cap L^\infty([0,T],H^{k+1}(\R))$. 
\end{theorem}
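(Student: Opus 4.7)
The plan is to linearize the Burgers-type nonlinearity and set up a Picard iteration at the level of $H^{k+1}$, establishing a uniform energy bound on the iterates for short time and then showing contraction in a weaker norm. Setting $u^{(0)}(x,t) := u_0(x)$ and, given $u^{(n)}$, defining $u^{(n+1)}$ as the solution of the linear transport equation
\begin{equation*}
\d_t u^{(n+1)} + u^{(n)}\, \d_x u^{(n+1)} = -K' * u^{(n)}, \qquad u^{(n+1)}(0) = u_0,
\end{equation*}
yields a well-defined iterate by the method of characteristics, since $H^{k+1}(\R) \emb C^1_b(\R)$ (valid because $k+1 \geq 2 > 3/2$) provides Lipschitz coefficients, and the source $K' * u^{(n)}$ stays in $C([0,T], H^{k+1})$ because $\|K' * w\|_{H^s} \leq \|w\|_{H^s}$ for every $s \geq 0$ (as $\hat K(\xi) = 1/(1+\xi^2)$ is bounded).

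The core step is a uniform $H^{k+1}$ a priori estimate. For any multi-index $\alpha$ with $|\alpha| \leq k+1$, differentiating the iteration equation, pairing with $\d^\alpha u^{(n+1)}$ in $L^2$, integrating by parts in the transport term to exchange one derivative with $\d_x u^{(n)}$, and bounding the remainder via a Kato--Ponce/Moser commutator estimate
\begin{equation*}
\|[\d^\alpha, u^{(n)}]\, \d_x u^{(n+1)}\|_{L^2} \leq C\big(\|\d_x u^{(n)}\|_{L^\infty} \|u^{(n+1)}\|_{H^{k+1}} + \|u^{(n)}\|_{H^{k+1}} \|\d_x u^{(n+1)}\|_{L^\infty}\big),
\end{equation*}
together with the embedding $H^{k+1} \emb W^{1,\infty}$, yields an inequality of the schematic form
\begin{equation*}
\diff{t} \|u^{(n+1)}(t)\|_{H^{k+1}}^2 \leq C\big(1 + \|u^{(n)}(t)\|_{H^{k+1}}\big)\|u^{(n+1)}(t)\|_{H^{k+1}}^2 + C\|u^{(n)}(t)\|_{H^{k+1}}^2.
\end{equation*}
Setting $M := 2\|u_0\|_{H^{k+1}}$ and choosing $T_* > 0$ small enough in terms of $M$, an induction on $n$ based on Gronwall's inequality shows $\|u^{(n)}(t)\|_{H^{k+1}} \leq M$ for all $n$ and all $t \in [0,T_*]$.

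To extract a limit, I consider the differences $w^{(n)} := u^{(n+1)} - u^{(n)}$, which satisfy
\begin{equation*}
\d_t w^{(n)} + u^{(n)}\, \d_x w^{(n)} + w^{(n-1)}\, \d_x u^{(n)} + K' * w^{(n-1)} = 0, \qquad w^{(n)}(0) = 0.
\end{equation*}
A straightforward $L^2$ energy estimate, using the uniform $H^{k+1}$ bound (hence uniform $W^{1,\infty}$ bound) on $u^{(n)}$ and boundedness of $K' * \cdot$ on $L^2$, gives $\|w^{(n)}(t)\|_{L^2} \leq C \int_0^t \|w^{(n-1)}(s)\|_{L^2}\, ds$, so $(u^{(n)})$ is Cauchy in $C([0,T_*], L^2(\R))$. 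Interpolating with the uniform $H^{k+1}$ bound promotes convergence to $C([0,T_*], H^{k}(\R))$, and the limit $u$ inherits $u \in L^\infty([0,T_*], H^{k+1}(\R))$ by weak-$*$ lower semicontinuity of the norm. Passing to the limit in the iteration equation (using the $H^k$-convergence for $u^{(n)}\, \d_x u^{(n+1)}$ and the continuity of $K' * \cdot$) shows that $u$ solves \eqref{eqn:F-W} with $u(0) = u_0$.

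Uniqueness is obtained by the same $L^2$ energy argument applied to the difference of two solutions sharing the initial datum, concluded by Gronwall's lemma. The main obstacle is the $H^{k+1}$ a priori estimate with life span $T_*$ depending only on $\|u_0\|_{H^{k+1}}$: it hinges on the Moser/commutator control of the transport term, which is where the possibility of losing one derivative is defused by the integration by parts. By contrast, the non-local perturbation $K' * \cdot$ is harmless, since $K \in L^1(\R)$ makes it bounded on every $H^s$ with norm controlled by $\|K\|_{L^1} = 1$.
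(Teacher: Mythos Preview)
Your argument is correct, but it differs from the approach the paper attributes to \cite{FS2004} for this theorem. There, the iteration freezes only the nonlocal term: given $v$, one solves the \emph{nonlinear} Burgers problem $u_t + u u_x = -K'*v$, $u(0)=u_0$, and then shows that the map $v \mapsto u$ is a contraction. That scheme leans on prior short-time theory for Burgers with source and isolates the convolution as the ``small'' perturbation. You instead linearize the transport coefficient as well, solving $\d_t u^{(n+1)} + u^{(n)} \d_x u^{(n+1)} = -K'*u^{(n)}$ and closing via uniform $H^{k+1}$ energy bounds (Kato--Ponce commutators) plus contraction in $L^2$ and interpolation. Your route is self-contained---it does not presuppose a Burgers theory---and is in fact the scheme the paper describes for the $H^\infty$ result of \cite{NaumShish94} and (with regularized data) for the Besov-scale result of \cite{HolTho17}. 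The trade-off is that the \cite{FS2004} contraction avoids commutator machinery entirely, while your approach yields the $H^{k+1}$ a~priori bound and the life-span dependence on $\norm{u_0}{H^{k+1}}$ more transparently.
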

As far as we understand the details of the proof  in \cite{FS2004}, it is implicit in its arguments that the actual solution regularity is better than just $C([0,T],H^{k}(\R)) \cap L^\infty([0,T],H^{k+1}(\R))$, so that one obtains a strong solution. In fact, it follows from the equation that  $\d_t u = - u \d_x u - K' * u \in L^\infty([0,T], H^k(\R))$, so that $u$ is Lipschitz continuous as a map $[0,T] \to H^k(\R)$. 
 
For the periodic case, a similar result, but with spatial $H^{s+1}$  regularity for general $s \in \R$ with $s > 1/2$ and solution $u \in C([0,T], H^{s+1}(\T))$, was given in \cite[Theorem 1]{Holmes16}. In addition, continuous dependence of $u$ on the initial data $u_0 \in H^{s+1}(\T)$ is noted there explicitly. Moreover, reasoning again via the equation we have that $u \in C^1([0,T],H^s(\T))$ as well, hence $u$ is a strong solution. The method of proof in \cite{Holmes16} rests on Galerkin approximation and uses involved commutator and regularization techniques to derive the key energy estimates yielding convergence in appropriate function spaces. The well-posedness statement with spatial regularity $H^{s+1}$ ($s > 1/2$) for periodic and non-periodic cases and even for a whole class of related equations is mentioned also in  \cite[Theorem 1]{Liu2006}, but there the proof is omitted and only a vague reference to a ``standard iteration scheme combined with a closed energy estimate'' is made. 

The following result from \cite[Theorem 1.1]{HolTho17} holds for both cases, i.e., with the spatial variable in $\T$ or $\R$, and extends well-posedness to spatial regularity measured in the Besov scales $B^{s+1}_{s,r}$ in place of merely $H^{s+1} = B^{s+1}_{2,2}$.  
\begin{theorem} Let $u_0 \in B^{s+1}_{2,r}$ with $s > 1/2$, $1 < r < \infty$ or $s +1 = 3/2$, $r = 1$. Then for any $0 < T < c / \norm{u_0}{B^{s+1}_{2,r}}$, where $c$ is some positive constant depending only on $s$, the Cauchy problem \eqref{eqn:F-W} and \eqref{ic:F-W} is uniquely solvable with $u \in C([0,T],B^{s+1}_{2,r})$. Furthermore, the map $u_0 \mapsto u$ is continuous $B^{s+1}_{2,r} \to C([0,T],B^{s+1}_{2,r})$.
\end{theorem}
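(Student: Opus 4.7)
The plan is to recast \eqref{eqn:F-W} in its quasilinear transport form $\d_t u + u \d_x u = -K' * u$, to treat the convolution as a non-local source of lower order, and to run a Picard--Friedrichs iteration in the Besov scale. The hypothesis on $(s,r)$ is tailored precisely so that $B^{s+1}_{2,r}(\R) \emb L^\infty(\R) \cap W^{1,\infty}(\R)$, which provides the Lipschitz control of the transport coefficient needed throughout. Two preliminary estimates serve as workhorses. First, the classical Bahouri--Chemin--Danchin linear transport bound: for $\d_t v + a\, \d_x v = f$ with $v|_{t=0} = v_0$,
\beq\label{transportest}
   \norm{v(t)}{B^{s+1}_{2,r}} \leq e^{C V(t)} \Big( \norm{v_0}{B^{s+1}_{2,r}} + \int_0^t e^{-C V(\tau)} \norm{f(\tau)}{B^{s+1}_{2,r}}\, d\tau\Big),
\eeq
where $V(t) = \int_0^t \norm{\d_x a(\tau)}{B^s_{2,r} \cap L^\infty}\, d\tau$, proved by Littlewood--Paley localisation together with a commutator estimate for $[\Delta_j, a\, \d_x]$. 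Second, since $\FT{K'}(\xi) = i\xi/(1+\xi^2)$ is a smooth bounded Fourier multiplier of order $-1$, the operator $u \mapsto K' * u$ acts continuously $B^{s+1}_{2,r} \to B^{s+1}_{2,r}$ (and in fact gains one derivative).

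Then I would iterate: set $u^{(0)} \equiv u_0$ and, for $n \geq 0$, define $u^{(n+1)} \in C([0,T],B^{s+1}_{2,r})$ as the unique solution of the linear transport problem
\begin{equation*}
   \d_t u^{(n+1)} + u^{(n)} \d_x u^{(n+1)} = -K' * u^{(n)}, \qquad u^{(n+1)}|_{t=0} = u_0.
\end{equation*}
Feeding this into \eqref{transportest} with $a = u^{(n)}$ and $f = -K' * u^{(n)}$, and using the algebra property of $B^{s+1}_{2,r}$ combined with the convolution bound, one shows inductively that for $T < c/\norm{u_0}{B^{s+1}_{2,r}}$ with $c = c(s) > 0$, the iterates are uniformly bounded in $L^\infty([0,T], B^{s+1}_{2,r})$ by $2 \norm{u_0}{B^{s+1}_{2,r}}$. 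A standard subtraction then shows that $w^{(n)} := u^{(n+1)} - u^{(n)}$ satisfies a transport equation whose source is controlled by $w^{(n-1)}$ in the \emph{weaker} space $B^s_{2,r}$ (via paraproduct decomposition and the uniform bound on $u^{(n)}$), which yields a geometric Cauchy property of $(u^{(n)})$ in $C([0,T], B^s_{2,r})$ after possibly shrinking $T$. By Fatou, the limit $u$ inherits the top-order bound $u \in L^\infty([0,T], B^{s+1}_{2,r})$ and obviously solves \eqref{eqn:F-W}; the upgrade to $u \in C([0,T], B^{s+1}_{2,r})$ requires right continuity of $\norm{u(t)}{B^{s+1}_{2,r}}$, which is obtained by plugging the limit solution itself back into \eqref{transportest} and exploiting the density of smooth functions in $B^{s+1}_{2,r}$ for $r < \infty$.

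Uniqueness follows by the same subtraction argument applied to two candidate solutions with common initial datum, closed by Gr\"onwall in $B^s_{2,r}$. The hardest step is \emph{continuous dependence in the top-order norm}: a direct difference estimate only yields that $u_{0,n} \to u_0$ in $B^{s+1}_{2,r}$ implies $u_n \to u$ in $C([0,T], B^s_{2,r})$, i.e., one loses one derivative, which is the standard quasilinear obstruction. To recover convergence at the top regularity I would invoke the Bona--Smith technique: regularise the data as $u_{0,n}^\eps := \rho_\eps * u_{0,n}$, exploit that the corresponding smoothed solutions satisfy a higher-regularity bound blowing up only like $\eps^{-1}$, and balance this singularity against the $O(\eps)$-smallness of $u_{0,n}^\eps - u_{0,n}$ in $B^s_{2,r}$. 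The borderline case $s+1 = 3/2$, $r = 1$ is delicate because the embedding into the Lipschitz class is sharp, but $B^{3/2}_{2,1}$ is still a Banach algebra embedding into $L^\infty$, so all the estimates above survive without essential change.
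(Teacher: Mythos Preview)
Your proposal is correct and follows essentially the same strategy that the paper attributes to \cite{HolTho17}: a Picard iteration through linear transport problems $\d_t u^{(n+1)} + u^{(n)} \d_x u^{(n+1)} = -K' * u^{(n)}$, with uniform Besov bounds via commutator/transport estimates and convergence established in the weaker space $B^s_{2,r}$. The only cosmetic difference is that the paper's summary starts from $u^{(0)}=0$ and uses a regularizing sequence $u_0^{(n)}$ of initial data (then extracts a convergent subsequence), whereas you keep the same $u_0$ at each step and show the Cauchy property directly; both variants are standard and lead to the same result.
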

We obtain again also $u \in C^1([0,T],B^{s}_{2,r})$ and thus have a strong solution. The proof starts with a regularizing sequence $(u_0^{(n)})_{n \in \N}$ of the initial value $u_0$, putting $u^{(0)} := 0$, and defining $u^{(n)}$ ($n \geq 1$) successively as the solution of the linear hyperbolic Cauchy problem
$$
   \d_t u^{(n)} + u^{(n-1)} \d_x u^{(n)} = - K' * u^{(n-1)}, \quad u^{(n)}|_{t=0} = u_0^{(n)}.
$$
It is then shown that energy estimates hold for short enough time $T > 0$ and allow for extraction of a convergent subsequence which can be used to define a solution. Again commutator estimates involving the regularization are crucial in the process.

\begin{remark}
In case of $u_0 \in H^2(\R)$ one can give an alternative proof for the unique existence of a short-time solution with spatial $H^2$ regularity based on Kato's semi-group approach for semi-linear evolution equations. This fact was indicated very briefly in \cite{Holmes16,HolTho17} after the basic well-posedness statements. The key elements and a sketch of this are provided in the introductory section of \cite{Haziot2017} and was worked out in more detail in the first part of the proof of Theorem 1 in \cite{Wei18}.
\end{remark}

\subsection{Wave breaking for strong solutions} In contrast to well-posedness results, an analysis of wave breaking does not require to strive for statements with lowest possible regularity of the initial value. In a way, it is even more impressive to see smooth initial wave profiles leading eventually to wave breaking.

The first clear indication that wave breaking may indeed happen for solutions of the Fornberg-Whitham equation was given already in \cite{Seliger68}, where a sketch of arguments was provided including a quantitative asymmetry condition in terms of the minimum and maximum slopes occurring in the initial wave profile (see also \cite[Section 13.14]{WhithamBook}). The arguments for a wave breaking result given later in \cite{NaumShish94} picked up the basic strategy from \cite{Seliger68}, namely to look at the time development of the locations with minimum and maximum slope in a solution and to consider these as curves in the spatial domain. However, the reasoning in  \cite{NaumShish94} is not mathematically complete, as explained in \cite{ConstantinEscher1998}, where the first rigorous proof of a wave breaking result was achieved. A main issue was that one cannot guarantee a time-dependent choice of the minimal or maximal slope location that is smooth with respect to time. The key to overcome this obstacle is a theorem on the evolution of extrema proved in  \cite[Theorem 2.1]{ConstantinEscher1998} (see also \cite[Appendix 6.3.2]{Constantin2011} or \cite[Page 104, Theorem 5]{ConstEschJohnVill2016}), which has by now become a standard tool in the analysis of wave breaking and that we state therefore here as a lemma. 
\begin{lemma}\label{extrevol} Let $T > 0$ and $v \in C^1([0,T[,H^2(\R))$. Then for every $t \in [0,T[$ there is some $\xi(t) \in \R$ such that
$$
       m(t) := \inf_{x \in \R} \d_x v(x,t) = \d_x v(\xi(t),t).
$$
The function $t \mapsto m(t)$ is locally Lipschitz continuous, thus differentiable almost everywhere, and satisfies 
$$
    m'(t) = \d_t \d_x v(\xi(t),t) \quad \text{for almost every } t \in \, ]0,T[.
$$
\end{lemma}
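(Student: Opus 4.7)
The plan is to carry out three steps: existence of a pointwise minimizer $\xi(t)$ at each fixed time, local Lipschitz continuity of $m$ in $t$, and finally the envelope-type identification of $m'(t)$.

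For the first step I would use that $v(t) \in H^2(\R)$ implies $\d_x v(\cdot,t) \in H^1(\R) \hookrightarrow C_0(\R)$, that is, $\d_x v(\cdot,t)$ is continuous on $\R$ and vanishes at infinity. Hence $m(t) = \inf_{x}\d_x v(x,t) \leq 0$. If $m(t) < 0$, the vanishing-at-infinity property confines a near-minimizing sequence to a compact set and the infimum is attained by continuity. If $m(t) = 0$, then $\d_x v(\cdot,t) \ge 0$ so $v(\cdot,t)$ is non-decreasing, but $v(\cdot,t) \in C_0(\R)$ forces $v(\cdot,t) \equiv 0$, whence every point is a minimizer. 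Pick any such $\xi(t)$; no measurable selection is needed for what follows.

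Next, the Lipschitz estimate. Since $v \in C^1([0,T[,H^2(\R))$, the map $t \mapsto \d_x v(\cdot,t)$ is $C^1$ into $H^1(\R)$, and by the embedding $H^1 \hookrightarrow L^\infty$ it is $C^1$ into $L^\infty(\R)$. On any compact subinterval $[a,b] \subset [0,T[$, $\|\d_t\d_x v(\cdot,s)\|_{L^\infty}$ is bounded by some $C=C(a,b)$, so
\[
  \|\d_x v(\cdot,t_1) - \d_x v(\cdot,t_2)\|_{L^\infty} \leq C |t_1-t_2|.
\]
Using $m(t_1) \leq \d_x v(\xi(t_2),t_1)$ and the symmetric inequality, I obtain $|m(t_1)-m(t_2)| \leq C |t_1-t_2|$, hence $m$ is locally Lipschitz and, by Rademacher, differentiable at a.e.\ $t \in\, ]0,T[$.

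Finally, for the formula, I would use the standard envelope trick. Fix any $t \in\, ]0,T[$ at which $m$ is differentiable and choose a minimizer $\xi(t)$. For every $h$ with $t+h \in [0,T[$,
\[
  m(t+h) \leq \d_x v(\xi(t),t+h), \qquad m(t) = \d_x v(\xi(t),t),
\]
so $m(t+h)-m(t) \leq \d_x v(\xi(t),t+h) - \d_x v(\xi(t),t)$. Because $\d_x v \in C^1([0,T[,C_0(\R))$, the right-hand side is $h\,\d_t\d_x v(\xi(t),t) + o(h)$. Dividing by $h > 0$ and by $h < 0$ and passing to the limits yields the two one-sided inequalities $m'(t) \leq \d_t\d_x v(\xi(t),t)$ and $m'(t) \geq \d_t\d_x v(\xi(t),t)$ respectively, giving the claimed equality (and, as a byproduct, showing that the right-hand side is independent of the choice of minimizer at any $t$ of differentiability).

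The only step with any real subtlety is the first one, i.e.\ ensuring that the infimum is actually attained (not merely approached); the $C_0(\R)$ embedding takes care of this cleanly. The Lipschitz step is routine, and the envelope argument is standard once one commits to a pointwise-in-$x$ choice of $\xi(t)$ rather than trying to make $\xi$ smooth in $t$, which is precisely the obstruction mentioned in the surrounding text.
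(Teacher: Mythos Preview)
The paper does not give its own proof of this lemma; it merely states the result and cites \cite[Theorem 2.1]{ConstantinEscher1998} (with secondary references to \cite{Constantin2011} and \cite{ConstEschJohnVill2016}) as the source. Your three-step argument---attainment via $H^1(\R)\hookrightarrow C_0(\R)$, local Lipschitz continuity from the $C^1$-in-time regularity into $L^\infty$, and the envelope inequality evaluated at a fixed minimizer $\xi(t)$---is correct and is precisely the standard proof found in those references, so there is nothing to contrast.
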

The analogous statement is true with the supremum in place of the infimum. 
A formulation for the periodic case is slightly simpler due to compactness of the torus (\cite[Lemma 3.1]{GH:18}).

The first part in the definition of wave breaking at time $T$ according to \eqref{wavebreaking} requires that  $\norm{u(t)}{L^\infty}$ stays bounded as $t \to T$. A nice proof of this fact for solutions with spatial $H^2$ regularity is given in \cite[Proposition 2]{Haziot2017} based on an adaptation of the above lemma for the extrema of $v$ rather than of $\d_x v$. We recall the statement.
\begin{proposition}\label{propsupbound} If $u_0 \in H^2(\R)$ and $T > 0$ is the maximal life span of the corresponding unique solution $u$, then we have
$$
     \sup_{t \in [0,T[} \norm{u(t)}{L^\infty} < \infty.
$$
\end{proposition}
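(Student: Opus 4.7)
The plan is to imitate the proof strategy underlying Lemma \ref{extrevol}, but applied to the extrema of $u$ itself rather than of $\d_x u$. Introduce
$$
   M(t) := \sup_{x \in \R} u(x,t) \qquad \text{and} \qquad m(t) := \inf_{x \in \R} u(x,t).
$$
Since $u(t) \in H^2(\R) \hookrightarrow C^1_0(\R)$, both quantities are finite and, whenever $M(t) > 0$ (respectively $m(t) < 0$), the extremum is attained at some point $\xi(t) \in \R$ (respectively $\zeta(t) \in \R$). Mimicking the proof of \cite[Theorem 2.1]{ConstantinEscher1998}, the functions $M$ and $m$ are locally Lipschitz on $[0,T[$, and at any point of differentiability $t$ one has the identifications
$$
   M'(t) = \d_t u(\xi(t),t), \qquad m'(t) = \d_t u(\zeta(t),t),
$$
with $M'(t) \leq 0$ in the degenerate case where $M(t) = 0$ is not attained, and analogously for $m$.

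The key observation is that at an interior maximum $\xi(t)$ the spatial derivative vanishes, $\d_x u(\xi(t),t) = 0$, so Equation \eqref{eqn:F-W} reduces at this point to
$$
   \d_t u(\xi(t),t) = - \bigl( K' * u(\cdot,t) \bigr)(\xi(t)).
$$
Since $K(x) = \frac{1}{2} e^{-|x|}$ gives $K' \in L^2(\R)$, Cauchy--Schwarz yields the pointwise bound $|(K' * u(\cdot,t))(x)| \leq \norm{K'}{L^2} \norm{u(t)}{L^2}$, and by the conservation law \eqref{consL2} the right-hand side equals $C := \norm{K'}{L^2} \norm{u_0}{L^2}$, a constant depending only on the initial datum. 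Exactly the same bound applies to $|m'(t)|$. Integrating in time then yields
$$
   M(t) \leq M(0) + C t, \qquad m(t) \geq m(0) - C t,
$$
so that $\norm{u(t)}{L^\infty} = \max\bigl(M(t),\, -m(t)\bigr) \leq \norm{u_0}{L^\infty} + C T$ uniformly for $t \in [0,T[$, which is finite whenever $T < \infty$ (the only case of interest for wave breaking).

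The main technical obstacle is the adaptation of Lemma \ref{extrevol} from $\d_x u$ to $u$ itself: one has to establish the Lipschitz continuity of $M$ and $m$ on $[0,T[$ and justify the differentiation formula at almost every $t$, while taking care of the degenerate situation in which the supremum $M(t) = 0$ is only attained in the limit $|x| \to \infty$. These are mild modifications of the Constantin--Escher argument, using $u \in C^1([0,T[,H^1(\R)) \cap C([0,T[, H^2(\R))$ and the continuous embedding $H^2(\R) \hookrightarrow C^1_0(\R)$; once this variant is in place, the algebraic step exploiting $\d_x u(\xi(t),t) = 0$ together with the $L^2$ conservation \eqref{consL2} closes the argument immediately.
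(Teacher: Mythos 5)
Your proof is correct and is essentially the paper's (i.e.\ Haziot's) argument: the paper explicitly describes the proof of this proposition as an adaptation of Lemma \ref{extrevol} to the extrema of $u$ rather than of $\d_x u$, which—combined with $\d_x u(\xi(t),t)=0$ at the extremum and the bound $\norm{K'*u(t)}{L^\infty}\leq\norm{K'}{L^2}\norm{u_0}{L^2}$ coming from the $L^2$ conservation \eqref{consL2}—is exactly what you do. The resulting linear-in-$t$ bound $\norm{u(t)}{L^\infty}\leq\norm{u_0}{L^\infty}+Ct$ is also the one recorded in the sources the paper cites (compare $\norm{u(t)}{L^\infty}\leq\norm{u_0}{L^\infty}+t\norm{u_0}{L^2}$ from \cite{GLC2018}), so your caveat that the stated supremum is guaranteed finite only when $T<\infty$ is shared by the original statements and covers the case relevant to wave breaking.
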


To prove that wave breaking actually occurs one has to show that there is a certain class of initial values $u_0 \in H^2(\R)$ such that $\norm{\d_x u(t)}{L^\infty}$ inevitably blows up as $t$ approaches the maximal life span $T$. We sketch out a basic strategy for such a proof attempt employing Lemma \ref{extrevol}:

\emph{Step 1:} Suppose $u_0 \in H^3(\R)$ and $T > 0$ is the maximal life span of the corresponding unique solution $u \in C([0,T[,H^3(\R)) \cap C^1([0,T[,H^2(\R))$. {\small (Note that we had to assume $H^3$ regularity in order to meet the regularity requirement $C^1([0,T[,H^2)$ for the function $v$ as in  Lemma \ref{extrevol}.)} For every $t \in [0,T[$ we define
$$
  m_1(t) := \inf_{x \in \R} \d_x u(x,t), \quad m_2(t) := \sup_{x \in \R} \d_x u(x,t)
$$
and $\xi_1(t), \xi_2(t) \in \R$ such that
$$
   m_1(t) = \d_x u(\xi_1(t),t), \quad m_2(t) = \d_x u(\xi_2(t),t)
$$
holds. The regularity of $u$ allows us to differentiate Equation \eqref{eqn:F-W} with respect to $x$, which yields
$$
   u_{t x} + u_x^2 + u u_{x x} + K * u_{x x} = 0.
$$
Upon observing that $u_{x x}(\xi_j(t),t) = 0$ holds by definition of $\xi_j(t)$, we evaluate this equation at $(\xi_j(t),t)$ and obtain
\beq\label{mjequ}
   m_j'(t) + m_j(t)^2 + (K * u_{x x}(t))(\xi_j(t)) = 0 \quad \text{for almost all } t \in [0,T[.
\eeq
The convolution term can be estimated from below upon an integration by parts (recalling  $K'(y) = - \sgn(y) e^{- |y|}/2$) in the following way
\begin{multline*}
   (K * u_{x x}(t))(\xi_j(t)) = - \int_{-\infty}^\infty K'(y) u_x(\xi_j(t) - y, t)\, dy\\ 
   = - \frac{1}{2} \int_{-\infty}^0 e^y u_x(\xi_j(t) - y, t)\, dy + \frac{1}{2} \int_0^\infty e^{-y} u_x(\xi_j(t) - y, t)\, dy
   \geq - \frac{m_2(t)}{2} \int_{-\infty}^0 e^y\, dy + \frac{m_1(t)}{2} \int_0^\infty e^{-y} \, dy\\
   = \frac{1}{2} (m_1(t) - m_2(t)).
\end{multline*}
Inserting this into \eqref{mjequ} gives the two differential inequalities
\beq\label{mjinequ}
   m_j'(t) \leq - m_j(t)^2 + \frac{1}{2} (m_2(t) - m_1(t)) \quad \text{for almost all } t \in\, ]0,T[ \text{ and } j =1,2.
\eeq

\emph{Step 2:} Suppose that 
\beq\label{m120}
m_1(0) + m_2(0) + S \leq 0 
\eeq
holds for some $S \geq 1$. Adding the two inequalities in \eqref{mjinequ} and observing $m_1 \leq m_2$ then yields
$$
  (m_1 + m_2)' \leq -m_1^2 - m_2^2 + m_2 - m_1 = (m_2 - m_1)(1 + m_1 + m_2) - 2 m_2^2
  \leq - m_2^2,
$$
which therefore in combination with \eqref{m120} gives
$$
  \forall t \in [0,T[\col \quad m_1(t) + m_2(t) + S \leq 0.
$$
We use this now in the inequality \eqref{mjinequ} for $j=1$ and obtain
$$
   m_1' \leq - m_1^2 + \frac{m_2}{2} - \frac{m_1}{2} \leq - m_1^2 + \frac{-S - m_1}{2} - \frac{m_1}{2}  
   = - \left(m_1 + \frac{1}{2}\right)^2 + \frac{1}{4} - \frac{S}{2} \leq - \left(m_1 + \frac{1}{2}\right)^2,
$$
which also implies
$$
    \left( m_1 + \frac{1}{2}\right)' \leq - \left( m_1 + \frac{1}{2}\right)^2.
$$

\emph{Step 3:} Putting $M(t) := m_1(t) + \frac{1}{2}$ we have $M(0) = m_1(0) + \frac{1}{2} \leq - S - m_2(0) + \frac{1}{2} < 0$ {\small (since $m_2(0) \geq 0$; otherwise, we could not have $u_0 \in L^2(\R)$)} and $M'(t) \leq - M(t)^2$, which means
$$
    \diff{t} \left( \frac{1}{M(t)} \right) = - \frac{M'(t)}{M(t)^2} \geq 1, \quad M(0) < 0,
$$
and thus implies
$$
    0 \geq \frac{1}{M(t)} \geq \frac{1}{M(0)} + t \quad (0 \leq t < 1/|M(0)| =: t_* \leq T).
$$
We conclude that $M(t) \to - \infty$ as $0 < t \to t_*$, hence $t_* \geq T$, thus $t_* = T$, and $\norm{\d_x u(t)}{L^\infty}$ cannot stay bounded as $t$ approaches $T$. 

We may thus state the following wave breaking result corresponding to \cite[Theorem 3.2]{ConstantinEscher1998} with two slight differences: First, availability of more general well-posedness results allows for less regular initial data; second, we discussed here only the specific convolution kernel $K(x) = \exp(-|x|)/2$ and not the whole class of nonzero symmetric kernel functions $K \in C(\R) \cap L^1(\R)$ that are decreasing on $[0,\infty[$. 
\begin{theorem}  If $u_0 \in H^3(\R)$ satisfies
$$
     \inf_{x \in \R} u_0'(x) + \sup_{x \in \R} u_0'(x) \leq -1,
$$
then we observe wave breaking for the unique solution of the Cauchy problem \eqref{eqn:F-W} and \eqref{ic:F-W} with initial value $u_0$. 
\end{theorem}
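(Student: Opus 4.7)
The plan is to take $S = 1$ in the hypothesis and follow the three-step scheme laid out in the preceding discussion. First I would invoke the well-posedness result to obtain, for $u_0 \in H^3(\R)$, a unique maximal strong solution $u \in C([0,T[,H^3(\R)) \cap C^1([0,T[,H^2(\R))$. Proposition~\ref{propsupbound} already guarantees $\sup_{t \in [0,T[} \norm{u(t)}{L^\infty} < \infty$, so the wave-breaking condition \eqref{wavebreaking} reduces to showing $\limsup_{t \uparrow T} \norm{\d_x u(t)}{L^\infty} = \infty$. By the Sobolev embedding $H^3(\R) \hookrightarrow C^2_b(\R)$, the pointwise infimum and supremum of $\d_x u(\cdot,t)$ are attained, so I may define $m_1(t)$, $m_2(t)$ and choose points $\xi_j(t)$ as in Step~1, with Lemma~\ref{extrevol} (applied to $v = u$, since $u \in C^1([0,T[,H^2(\R))$ as required) guaranteeing that $m_1, m_2$ are locally Lipschitz and differentiable a.e.\ with $m_j'(t) = \d_t \d_x u(\xi_j(t),t)$.

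Next I would differentiate \eqref{eqn:F-W} in $x$ to get $u_{tx} + u_x^2 + u u_{xx} + K * u_{xx} = 0$, evaluate at $(\xi_j(t),t)$ where $u_{xx}$ vanishes by the extremal property, and arrive at the identity \eqref{mjequ}. The key pointwise bound on the convolution term $(K * u_{xx}(t))(\xi_j(t)) \geq \tfrac{1}{2}(m_1(t) - m_2(t))$ follows from integration by parts using $K'(y) = -\tfrac{1}{2}\sgn(y) e^{-|y|}$ together with the trivial estimates $m_1 \leq u_x \leq m_2$; this yields the two Riccati-type differential inequalities \eqref{mjinequ}.

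With the hypothesis written as $m_1(0) + m_2(0) + 1 \leq 0$ (i.e.\ $S = 1$), I would add the two inequalities in \eqref{mjinequ}, use $m_2 - m_1 \geq 0$, and observe that whenever $m_1 + m_2 + 1 \leq 0$ we get $(m_1+m_2)' \leq -m_2^2 \leq 0$; a continuity/bootstrap argument then propagates $m_1(t) + m_2(t) + 1 \leq 0$ for all $t \in [0,T[$. Substituting $m_2 \leq -1 - m_1$ into \eqref{mjinequ} for $j=1$ gives the clean Riccati bound $(m_1 + \tfrac{1}{2})' \leq -(m_1 + \tfrac{1}{2})^2$. Setting $M(t) := m_1(t) + \tfrac12$, I first note $M(0) < 0$: since $u_0 \in L^2(\R)$ is nonconstant on bounded support of its derivative, necessarily $m_2(0) \geq 0$, so $M(0) \leq -1 - m_2(0) + \tfrac12 \leq -\tfrac12 < 0$. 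Dividing the Riccati inequality by $M^2$ and integrating yields $1/M(t) \geq 1/M(0) + t$, forcing $M(t) \to -\infty$ by $t = 1/|M(0)|$, hence $\norm{\d_x u(t)}{L^\infty} \to \infty$ before the maximal life span and wave breaking occurs.

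The principal obstacle is the one already anticipated in the discussion: the extremal locations $\xi_j(t)$ cannot in general be chosen smoothly in $t$, so one cannot naively differentiate $m_j(t)$ via the chain rule. This is precisely what Lemma~\ref{extrevol} overcomes, and the a.e.-differentiability it supplies is fully compatible with the Gronwall/ODE comparison steps used above. A minor point to handle carefully is the propagation of the sign condition $m_1 + m_2 + 1 \leq 0$: since it holds at $t = 0$ and the derivative of $m_1 + m_2$ is $\leq 0$ on the open set where the condition holds, one concludes by continuity that the closed condition is preserved on all of $[0,T[$.
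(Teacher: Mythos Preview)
Your proposal is correct and follows essentially the same three-step scheme that the paper lays out immediately before the theorem statement: set up $m_1$, $m_2$ via Lemma~\ref{extrevol}, derive the Riccati-type inequalities~\eqref{mjinequ} from the differentiated equation and the convolution estimate, propagate the sign condition $m_1+m_2+1\le 0$, and conclude blow-up of $M=m_1+\tfrac12$ via the scalar Riccati comparison. You are in fact slightly more explicit than the paper about the bootstrap needed to propagate $m_1(t)+m_2(t)+1\le 0$ (the paper's inequality $(m_2-m_1)(1+m_1+m_2)-2m_2^2\le -m_2^2$ tacitly uses this condition), but otherwise the arguments coincide.
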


Note that the divergence in Step 3 of the above chain of reasoning ultimately rests on the extra condition \eqref{m120} and this is the prototype of an \emph{initial wave profile asymmetry} mentioned in the introduction to the current subsection. Unfortunately, a direct comparison of quantitative wave breaking conditions used in various results on wave breaking in the literature is somewhat impaired by the fact that these certainly have to depend on the exact conventions used for scaling and signs in the Fornberg-Whitham equation. 

The reasoning in the wave breaking result of \cite[Section 3]{Haziot2017} is similar to the above, but uses refined estimates in Steps 1 and 2  and gives a  sufficient condition on the minimal and maximal slopes of $u_0$ weighted by a real parameter from a bounded interval. For the periodic case, a wave breaking result along the lines of the above theorem is proved in \cite[Section 3]{GH:18}. All the previous sufficient conditions on $u_0$ have been shown in \cite[Theorem 2]{Wei18} to be special cases of one more general condition that still leads to wave breaking. The proof departs from the above strategy after inequalities \eqref{mjinequ} at the end of Step 1 and succeeds to produce subtle bounds on an appropriate linear combination of $m_1^2$ and $m_2 - m_1$, which leads to a sufficient condition of the structure 
$$
   m_1(0) < \min (- c_1, - c_2(1 + \sqrt{1 + c_3 m_2(0)}))
$$
with positive constants $c_j$ depending on the precise scaling and sign conventions used in the Fornberg-Whitham equation.

For smooth periodic solutions the blow-up of $u_x$ in finite time is shown in \cite[Theorem 2]{Liu2006}, if  $- \inf_{x \in \R} u_0'(x)$ is sufficiently large\footnote{This is a bit reminiscent of the classical blow-up condition for initial values with the Burgers equation, where negative slope causes characteristics to intersect.}, and a similar result is shown in \cite[Section 4]{GNg:16} for the non-periodic case with initial value $u_0 \in C^1(\R) \cap L^1(\R)$. Both of these proofs use elaborate estimates along characteristics and the sufficient conditions require in particular domination of $\norm{u_0}{L^\infty}$ or $\norm{u_0}{L^1}$, respectively. To justify these results strictly as proofs of wave breaking, one should also guarantee boundedness of $\norm{u(t)}{L^\infty}$ as $t$ approaches the critical blow-up time $t_*$. This is implicitly so in \cite{GNg:16}, since there $u$ is supposed to be the unique weak entropy solution with initial value $u_0$.

Some numerical case studies of wave breaking as the formation of shocks in weak solutions on the torus are contained in \cite{HO:19}.  They suggest that only negative infinities of $u_x$ are developing and that $u_x$ stays bounded from above at the moment of wave breaking. This also finds support by the Oleinik type inequality proved in \cite[Lemma 2.1]{GNg:16} (see also \eqref{Oleinik} below) for weak entropy solutions on the real line and can be shown directly for strong solutions with spatial $H^3$ regularity by calling on Lemma \ref{extrevol}. We will discuss this below after first listing a few basic results about convolution with $K'$ that will also be useful for the application of semigroup theory later on.
\begin{lemma}\label{prop:K-bounds}
  The linear operator $u \mapsto K'*u$ 
  \begin{itemize}
    \item is bounded from $L^q(\R)$ to $L^p(\R)$ for all $p, q \in \R$ with $1 \leq q \leq p \leq \infty$,
    \item maps $BV(\R)$ into $W^{1,\infty}(\R) \cap W^{1,1}(\R)$,
    \item and for any $u \in L^\infty(\R)$ one has that
\begin{equation}\label{derivativeconvolutionbound}
  \sup |\partial_x (K'*u)| \leq   2 \| u \|_\infty.   
  \end{equation}
  \end{itemize}
\end{lemma}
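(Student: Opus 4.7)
\textbf{Proof plan for Lemma \ref{prop:K-bounds}.} The idea is to exploit three facts throughout: (i) $K$ and $K'$ both lie in $L^r(\R)$ for every $r \in [1,\infty]$ (with $|K'(x)| = e^{-|x|}/2$ decaying exponentially), (ii) the distributional identity $K - K'' = \delta$ from \eqref{eqn:Kprop}, which we rewrite as $K'' = K - \delta$, and (iii) the representation $K' * u = K * u'$ available whenever $u'$ can be interpreted as a finite Radon measure. With these in hand the three bulleted claims become essentially applications of Young's convolution inequality.

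The first bullet is immediate: for $1 \le q \le p \le \infty$ choose $r \in [1,\infty]$ by $1/r = 1 + 1/p - 1/q \in [0,1]$. Since $K' \in L^r(\R)$, Young's inequality yields $\norm{K' * u}{L^p} \le \norm{K'}{L^r} \norm{u}{L^q}$. The third bullet is also almost immediate: convolving with the distribution $K'' = K - \delta$, one has $\partial_x (K' * u) = K'' * u = K * u - u$ in $\D'(\R)$, so $|\partial_x(K' * u)(x)| \le \norm{K}{L^1} \norm{u}{L^\infty} + \norm{u}{L^\infty} = 2 \norm{u}{L^\infty}$, using $\norm{K}{L^1} = 1$.

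The second bullet is where the real work lies. For $u \in BV(\R)$ the distributional derivative $\mu = u'$ is a finite signed Radon measure on $\R$ with $|\mu|(\R) = |u|_{BV}$, and $u$ has finite limits at $\pm\infty$, so $u$ is bounded. First I would establish the identity $K' * u = K * \mu$ as an equality in $L^\infty(\R)$. The cleanest route is to split off the constant part $u(-\infty)$ (for which $K' * u(-\infty) = 0$ since $K'$ is odd and integrable) and to write the remainder as $\tilde u(x) = \mu(]-\infty,x])$; then a Fubini computation, exchanging the $y$-integration in $\int K'(x-y)\tilde u(y)\,dy$ with integration against $d\mu$, produces $\int K(x-z)\,d\mu(z) = (K*\mu)(x)$. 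Alternatively one may first mollify $u$ by a standard mollifier, apply $K' * u_\varepsilon = K * u_\varepsilon'$ for smooth $u_\varepsilon$, and pass to the limit, checking weak-$*$ convergence of $u_\varepsilon'$ to $\mu$.

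Once $K' * u = K * \mu$ is established, both $L^1$ and $L^\infty$ bounds drop out of Young-type estimates for convolution of a Radon measure with an $L^p$ kernel: $\norm{K * \mu}{L^1} \le \norm{K}{L^1} |\mu|(\R)$ and $\norm{K * \mu}{L^\infty} \le \norm{K}{L^\infty}|\mu|(\R) = |\mu|(\R)/2$. Similarly, differentiating inside the convolution (which is justified since $K' \in L^1 \cap L^\infty$ and $\mu$ is finite) gives $\partial_x(K'*u) = K'*\mu$, and a parallel estimate yields $\norm{K'*\mu}{L^1} \le |\mu|(\R)$ and $\norm{K'*\mu}{L^\infty} \le |\mu|(\R)/2$. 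Hence $K' * u \in W^{1,1}(\R) \cap W^{1,\infty}(\R)$ with all four norms controlled by $|u|_{BV}$. The main technical obstacle is, as anticipated, the justification of $K' * u = K * u'$ for $u \in BV$ that is not assumed to lie in $L^1$; once this is settled, the rest is bookkeeping with Young's inequality and $\norm{K}{L^1} = 1$, $\norm{K}{L^\infty} = \norm{K'}{L^\infty} = 1/2$.
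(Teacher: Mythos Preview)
Your argument is correct and follows essentially the same strategy as the paper: Young's convolution inequality for the first bullet, the identity $K'' = K - \delta$ for the third, and Young's inequality for convolution with the finite measure $Du$ for the second. The paper is terser on the second point: it simply writes $\partial_x(K'*u) = K' * Du$ and invokes the measure version of Young's inequality (Folland, Proposition 8.49) to bound this in every $L^p$, implicitly relying on the first bullet together with the embedding $BV(\R) \subset L^1(\R) \cap L^\infty(\R)$ (in the $L^1$-based definition of $BV$ used here, cf.\ the reference to Leoni) to handle $K'*u$ itself. Your detour through the identity $K'*u = K*\mu$ is a bit more work but perfectly sound, and has the minor advantage of being self-contained even under a weaker notion of $BV$ that does not presuppose $u \in L^1$.
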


\begin{proof}
  For the first point, as $K' \in L^r(\R)$ for every $1 \leq r \leq \infty$, we obtain $\| K'*u \|_p \leq \| K' \|_r \| u \|_q$, if $1 \leq q := r p / (r + r p - p) \leq p$, from Young's convolution inequality (\cite[Proposition 8.7]{Folland1999}).

  For the second point, we note that $\partial_x (K' * u) = K' * Du$, where we may interpret $Du$ as the BV derivative of $u$, which is a finite measure by assumption (cf.\ \cite[Definition 7.1]{Leoni-2017}). We can then apply the version of Young's inequality for convolution with measures (cf. \cite[Proposition 8.49]{Folland1999}) to obtain (with $1 \leq p \leq \infty$ arbitrary)
$$
   \| K'*Du\|_p \leq |Du|(\R) \cdot \|K'\|_p,
$$
where $|Du|$ denotes the total variation measure associated with $Du$.
  
For the last point, note that $K'' = K - \delta$ and we therefore obtain
$$
   \sup |\partial_x (K' * u)| = \| K''  * u \|_\infty  \leq 
     \| K * u\|_\infty + \| u \|_\infty \leq (\| K \|_1 + 1)  \| u \|_\infty = 2  \| u \|_\infty.
$$
\end{proof}

Assuming initial data $u_0 \in H^3(\R) \subset W^{1,1}(\R) \subset BV(\R) \subset L^1(\R) \cap L^\infty(\R)$ there is some maximal life span $T > 0$ of a unique strong solution $u \in C([0,T[,H^3(\R)) \cap C^1([0,T[,H^2(\R))$. In case $T < \infty$, $u$ ceases to be a strong solution in the form of wave breaking at time $t = T$, i.e., $\sup_{0 \leq t < T}\| u(t)\|_\infty$ is bounded while $\limsup_{t \uparrow T}\| u_x(t)\|_\infty = \infty$, in fact, $\inf_{x \in \R} \d_x u (x,t) \to - \infty$ as $t \to T$ (cf.\ \cite[Proposition 1]{Haziot2017}). Furthermore, there are sufficient conditions on the initial wave profile $u_0$ to definitely cause $T < \infty$, thus wave breaking occurs even for smooth initial values. 
Combined with the following proposition we may deduce that in case of wave breaking
$$
  \text{a shock in the spatial wave profile can only form as a downward jump} 
$$
(in the direction of growing $x$). 
\begin{proposition} If $u_0 \in H^3(\R)$ and $T$ is the maximal life span of the corresponding unique strong solution $u \in C([0,T[, H^3(\R)) \cap C^1([0,T[,H^2(\R))$ to \eqref{eqn:F-W} and \eqref{ic:F-W},  then 
\begin{equation}
   \sup_{0 \leq t < T} \sup_{x \in \R} \d_x u (x,t) < \infty.
\end{equation}
\end{proposition}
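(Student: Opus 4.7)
The plan is to mimic the strategy of Step 1 in the wave breaking analysis above, but applied to the supremum $m_2(t)$ rather than the infimum, and with a different estimate of the convolution term that produces a Riccati-type differential inequality forcing $m_2$ to stay bounded.

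First, I would invoke the supremum analog of Lemma \ref{extrevol}, which is available because $u \in C^1([0,T[, H^2(\R))$: one obtains a locally Lipschitz function $m_2(t) := \sup_{x \in \R}\d_x u(x,t) = \d_x u(\xi_2(t),t)$ for some $\xi_2(t) \in \R$, with $m_2'(t) = \d_t \d_x u(\xi_2(t),t)$ for almost every $t$. Differentiating Equation \eqref{eqn:F-W} with respect to $x$ yields
\[
u_{tx} + u_x^2 + u u_{xx} + K * u_{xx} = 0.
\]
The key step is then to invoke the distributional property $K - K'' = \delta$ from \eqref{eqn:Kprop} to rewrite $K * u_{xx} = K'' * u = K * u - u$ as a \emph{local} expression in $u$, so that the differentiated equation becomes
\[
u_{tx} + u_x^2 + u u_{xx} + K * u - u = 0.
\]

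Next I would evaluate at $(\xi_2(t), t)$, using that $u_{xx}(\xi_2(t),t) = 0$ by the first-order condition for a maximum of $\d_x u(\cdot,t) \in H^2(\R) \hookrightarrow C^1(\R)$. This gives
\[
m_2'(t) = -m_2(t)^2 - (K * u)(\xi_2(t), t) + u(\xi_2(t), t) \quad \text{for a.e.\ } t \in [0,T[.
\]
By Proposition \ref{propsupbound}, $M := \sup_{0 \leq t < T}\norm{u(t)}{L^\infty}$ is finite, and since $\norm{K}{L^1} = 1$ Young's inequality gives $\norm{K * u(t)}{L^\infty} \leq M$. The above identity therefore yields the differential inequality
\[
m_2'(t) \leq - m_2(t)^2 + 2 M \quad \text{for a.e.\ } t \in [0,T[.
\]
A standard comparison argument then concludes the proof: setting $C := \max(m_2(0), \sqrt{2 M})$, whenever $m_2(t_0) > C$ we have $m_2'(t_0) < 0$, so by local Lipschitz continuity of $m_2$ the threshold $C$ cannot be exceeded on $[0,T[$.

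The step I expect to be the main technical hurdle is the justification of the supremum version of Lemma \ref{extrevol} and the identity $u_{xx}(\xi_2(t),t) = 0$. Since $\d_x u(\cdot,t) \in H^2(\R)$ is continuous and vanishes at infinity, the supremum is attained at an interior point of $\R$ whenever $m_2(t) > 0$, which is the only case relevant for the bound; when $m_2(t) \leq 0$ the desired estimate is automatic at that instant. Beyond this, the remaining manipulations are routine and follow the pattern already carried out for $m_1$ in the wave breaking analysis above.
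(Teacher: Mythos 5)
Your proposal is correct and follows essentially the same route as the paper: the supremum version of Lemma \ref{extrevol}, differentiation of the equation, the vanishing of $u_{xx}$ at the maximizer, the bound $|\d_x(K*u_x)| = |K*u - u| \leq 2\norm{u}{L^\infty}$ (which is exactly \eqref{derivativeconvolutionbound}), and Proposition \ref{propsupbound} to get the Riccati inequality $m_2' \leq -m_2^2 + 2M$. The only (immaterial) difference is the final step, where the paper compares with the explicit $\tanh/\coth$ solutions of $y' = c^2 - y^2$ via a cited ODE comparison theorem, while you use a threshold/barrier argument, which works equally well since $m_2$ is locally Lipschitz and hence absolutely continuous.
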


\begin{proof} We put $M(t) := \sup_{x \in \R} u_x(x,t)$ and may call on Lemma \ref{extrevol} to deduce the following three facts:  $M$ is  differentiable almost everywhere on $[0,T[$; for every $t \in [0,T[$ there exists $\xi(t) \in \R$ such that $M(t) = u_x(\xi(t),t)$; and we have the relation
$$
      M'(t) = u_{tx}(\xi(t),t) \quad \text{a.e.\ on } [0,T[.
$$ 
Noting that $u_{xx}(\xi(t),t) = 0$ we obtain upon differentiation in \eqref{eqn:F-W} from \eqref{derivativeconvolutionbound} in Lemma \ref{prop:K-bounds} 
$$ 
  M'(t) = - M(t)^2 - 0 - \d_x \big(K \ast u_x(.,t)\big)(\xi(t)) \leq 
     -M(t)^2 + 2  \|u(t)\|_\infty \quad \text{for almost every $t \in [0,T[$}.
$$ 
By Proposition \ref{propsupbound} we have $c^2 := 2 \sup_{0 \leq t < T} \|u(t)\|_\infty < \infty$ (with $c \geq 0$) and obtain
$$
    M'(t) \leq c^2 - M(t)^2.
$$
Note that $0 \leq M(0) = \sup_{x \in \R} u_0'(x) < \infty$, since $u_0 \in L^2(\R) \cap C^1(\R)$ and $u_0' \in H^2(\R) \subset L^\infty(\R)$. Now  consider the solution $y$ to the initial value problem $y(0) = M(0)$, $y'(t) = c^2 - y(t)^2$: It is constant, if $M(0) = c$; in case $M(0) <  c$, we have  $y(t) = c \tanh(c t + \alpha)$ with $\tanh(\alpha) = M(0)/c < 1$; and in case $M(0) > c$, we have  $y(t) = c \coth(c t + \alpha)$ with $\coth(\alpha) = M(0)/c > 1$. In any case, $y(t)$ exists for all $t \in [0,T]$ and is bounded. Since $M(0) = y(0)$, $M' \leq c^2 - M^2$, and $y' = c^2 - y^2$, an application of the comparison theorem for ordinary differential equations (e.g., \cite[Lemma 16.4]{Amann1990}) yields $M(t) \leq y(t)$ for $t \in [0,T[$, thus $M$ is bounded from above. 
\end{proof}

\begin{remark} The above proposition does not tell whether the height of a downward shock that was formed due to wave breaking will stay bounded or decrease as time progresses.  As the existence of bounded, piecewise smooth, traveling wave solutions with entropic jump discontinuities shows, we cannot in general expect a decrease of the shock height with time for entropy solutions in the sense of Definition \ref{defentropysol} (see the paragraph on heteroclinic connections in \cite[Section 3]{FS2004}  or  \cite{GH2018}). 
\end{remark}

\section{Weak solutions from entropy concepts, semigroup methods, or traveling waves}

\subsection{Weak entropy solutions}
First indications that the \emph{method of vanishing viscosity} produces a convergent scheme seem to be given in \cite[Chapter 5, \pg 2 and \pg 3]{NaumShish94}, although their notion of \emph{generalized solution} remains vague and uniqueness is not addressed. The basic strategy of vanishing viscosity was later used to produce the following rigorous statement on weak entropy solutions for the Fornberg-Whitham equation in \cite[Theorem 4.2]{FS2004}, where spatial $BV$ regularity is assumed. {\small (The original formulation does not describe the relation of the solution $u$ with the initial value $u_0$, but we know from our previous discussion of the solution concepts that $u(0) = u_0$ holds in the sense of $u \in C([0,\infty), L^1(\R))$, since $u_0 \in BV(\R) \subseteq L^1(\R) \cap L^\infty(\R)$.)} The uniqueness follows in the proof given in \cite{FS2004} from an intermediate $L^1$-stability result (see also \eqref{L1stab} below).
\begin{theorem} If $u_0 \in BV(\R)$, then there is a unique weak entropy solution $u$ (in the sense of Definition \ref{defentropysol2}) to
the Cauchy problem \eqref{eqn:F-W} and \eqref{ic:F-W}, which in addition satisfies $u \in L^\infty_\loc([0,\infty[,BV(\R))$.
\end{theorem}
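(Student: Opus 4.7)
The natural route, as indicated by the text above and in line with \cite{FS2004}, is a vanishing viscosity scheme. For $\eps > 0$ I would regularise the initial datum by $u_0^\eps := \rho_\eps * u_0 \in H^\infty(\R)$ (still bounded in $BV(\R)$ by $|Du_0|(\R)$) and solve the parabolic Cauchy problem
\beq\label{visc}
   \d_t u^\eps + u^\eps \d_x u^\eps + K' * u^\eps = \eps \d_x^2 u^\eps, \quad u^\eps|_{t=0} = u_0^\eps,
\eeq
globally in time via standard semilinear parabolic theory (Lemma \ref{prop:K-bounds} ensures that the non-local term is a bounded perturbation on all reasonable scales). The first task is to derive bounds on $u^\eps$ that are uniform in $\eps$ and hold on every $[0,T]$: an $L^\infty$ bound via a maximum principle comparison with an ODE of the form $y' = \norm{K'*u^\eps}{L^\infty} \leq C \norm{u^\eps}{L^\infty}$, and then a spatial-$BV$ bound by differentiating \eqref{visc} in $x$, multiplying by $\sgn(\d_x u^\eps)$ smoothed out, and integrating; the transport term produces a total derivative and drops out, the viscous term gives a non-positive contribution, and the only non-trivial piece is $\int |\d_x K' * u^\eps | \, dx \leq \norm{u^\eps}{L^\infty}$ thanks to $K'' = K - \de$. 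Grönwall then delivers $\sup_{0\leq t \leq T}(\norm{u^\eps(t)}{L^\infty} + |Du^\eps(t)|(\R)) \leq C(T,u_0)$.

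With these uniform estimates in hand, spatial compactness comes from Helly's selection theorem and temporal equicontinuity in $L^1_\loc$ follows from the equation itself: $\d_t u^\eps$ is bounded in $L^\infty([0,T],W^{-1,1}_\loc)$ (writing $u^\eps \d_x u^\eps = \d_x((u^\eps)^2/2)$ and using $\norm{K'*u^\eps}{L^1_\loc}$ bounded, together with the viscous term $\eps \d_x^2 u^\eps$ which is bounded in $W^{-1,1}$ uniformly since $\eps \d_x u^\eps$ is bounded in $L^1_\loc$). An Aubin--Lions / refined Kolmogorov-type argument then extracts a subsequence $u^{\eps_n} \to u$ in $C([0,T],L^1_\loc(\R))$ and almost everywhere, with $u \in L^\infty_\loc([0,\infty[,BV(\R)) \cap L^\infty_\loc([0,\infty[,L^\infty(\R))$. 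Multiplying \eqref{visc} by $\sgn(u^\eps - \la) \phi$ for non-negative $\phi \in \D(\R \times\, ]0,\infty[)$, regularising the sign, integrating by parts and sending $\eps \to 0$ yields the entropy inequality \eqref{eqn:entropy-sol2}; the viscous contribution becomes a non-positive term because of the convexity of $z \mapsto |z - \la|$, which is the crux of the vanishing viscosity argument. Continuity of $u$ from $[0,\infty[$ into $L^1(\R)$ up to $t=0$ with $u(0)=u_0$ follows from the temporal equicontinuity plus $u_0^\eps \to u_0$ in $L^1(\R)$.

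For uniqueness I would use a Kru\v{z}kov-type doubling of variables adapted to the non-local source. Given two weak entropy solutions $u$ and $v$ with initial data $u_0, v_0 \in BV(\R)$, testing the entropy inequality for $u(x,t)$ with parameter $\la=v(y,s)$ and that for $v(y,s)$ with parameter $\la=u(x,t)$, adding and taking the diagonal limit produces
\beq\label{Kruz}
   \d_t |u-v| + \d_x\bigl(\sgn(u-v)(u^2-v^2)/2\bigr) + \sgn(u-v)\bigl(K'*(u-v)\bigr) \leq 0
\eeq
in $\D'$. Integrating against a spatial cutoff and using Lemma \ref{prop:K-bounds} to estimate
\[
   \Bigl|\int \sgn(u-v)\bigl(K'*(u-v)\bigr)\,dx\Bigr| \leq \norm{K'*(u-v)}{L^1} \leq \norm{K'}{L^1}\norm{u-v}{L^1},
\]
a Grönwall argument gives the $L^1$-stability estimate
\beq\label{L1stabpl}
   \norm{u(t)-v(t)}{L^1(\R)} \leq e^{Ct}\norm{u_0-v_0}{L^1(\R)}
\eeq
and, in particular, uniqueness. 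The main obstacle I expect is the derivation of the uniform $BV$ bound in the presence of the non-local term: naively differentiating once in $x$ produces $K''*u^\eps = K*u^\eps - u^\eps$, and one must carefully exploit $\int \sgn(\d_x u^\eps)(K*u^\eps - u^\eps)\,dx \leq 2\norm{u^\eps}{L^\infty}$ rather than losing a derivative. The rest is a now-standard Kru\v{z}kov/vanishing-viscosity package, but this specific estimate is what makes the scheme work for \eqref{eqn:F-W}.
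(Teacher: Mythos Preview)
Your plan is precisely the vanishing-viscosity route that the paper attributes to \cite{FS2004}; note that the paper does not give its own proof of this theorem but only records the method and observes that uniqueness follows from the $L^1$-stability estimate \eqref{L1stab}. So at the level of strategy there is nothing to compare: parabolic regularisation, uniform $L^\infty$ and $BV$ bounds, compactness, passage to the Kru\v{z}kov inequality, and doubling of variables for uniqueness is exactly the cited argument.

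One bookkeeping slip to fix: the bound you write for the $BV$ estimate, $\int |\d_x(K'*u^\eps)|\,dx \leq \norm{u^\eps}{L^\infty}$ (and the variant $\int \sgn(\d_x u^\eps)(K*u^\eps - u^\eps)\,dx \leq 2\norm{u^\eps}{L^\infty}$), is dimensionally wrong over $\R$. The clean way is to keep the convolution on the derivative, $\d_x(K'*u^\eps) = K'*\d_x u^\eps$, and use Young to get $\norm{K'*\d_x u^\eps}{L^1} \leq \norm{K'}{L^1}\,\norm{\d_x u^\eps}{L^1} = \norm{\d_x u^\eps}{L^1}$, which closes the Gr\"onwall loop for the total variation directly; if you prefer $K'' = K - \de$, the resulting bound is $2\norm{u^\eps}{L^1}$, not $L^\infty$. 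Either way the estimate goes through, so this is a correction of detail, not a gap in the argument.
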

This result was extended in \cite[Theorem 1.2]{GNg:16} to the general case described by the situation in Definition \ref{defentropysol2}. We note that although \cite[Definition 1.1]{GNg:16} does not explicitly specify the precise quality assumed of the initial value $u_0$ and the formulation in \cite[Theorem 1.2]{GNg:16} speaks only of $u_0 \in L^1(\R)$, we have some doubt whether the concept and all the proof details are true without having also $u_0 \in L^\infty(\R)$ a priori. In any case, our formulation of the main result with the a priori requirement $u_0 \in L^1(\R) \cap L^\infty(\R)$ is certainly covered by and coherent with \cite{GNg:16}.
\begin{theorem}\label{GNThm} Given $u_0 \in L^1(\R) \cap L^\infty(\R)$, the Cauchy problem \eqref{eqn:F-W} and \eqref{ic:F-W} has a unique weak entropy solution $u$ (in the sense of Definition \ref{defentropysol2}). It satisfies the Oleinik type inequality
\beq\label{Oleinik}
    \forall t > 0, \forall x, y \in \R, x < y \col \quad u(y,t) - u(x,t) \leq \left( \frac{1}{t} + 2 + 2t (1 + 2 e^t \norm{u_0}{L^1}) \right) (y - x).
\eeq
Moreover, the following $L^1$-stability holds: If $v$ is the weak entropy solution corresponding to the initial value $v_0 \in L^1(\R) \cap L^\infty(\R)$, then
\beq\label{L1stab}
   \forall t > 0\col \quad \norm{u(t) - v(t)}{L^1} \leq e^t \norm{u_0 - v_0}{L^1}.
\eeq
\end{theorem}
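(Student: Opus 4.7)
My plan is to follow the vanishing-viscosity strategy of \cite{GNg:16}, treating the nonlocal term $K' * u$ as a globally Lipschitz linear perturbation of the inviscid Burgers equation, and to obtain uniqueness and \eqref{L1stab} via a Kruzhkov doubling-of-variables argument. Concretely, I mollify the initial data, $u_0^\eps := u_0 * \rho_\eps$, and solve the regularized parabolic Cauchy problem
\begin{equation}
   \d_t u^\eps + u^\eps \d_x u^\eps + K' * u^\eps = \eps\, \d_x^2 u^\eps, \qquad u^\eps|_{t=0} = u_0^\eps,
\end{equation}
globally in time for each fixed $\eps > 0$, which is standard because, by Lemma \ref{prop:K-bounds}, $u \mapsto K' * u$ is bounded on every relevant function space.

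The next step is $\eps$-uniform a priori estimates. The maximum principle combined with $\norm{K' * u^\eps}{L^\infty} \leq \norm{u^\eps}{L^\infty}$ gives $\norm{u^\eps(t)}{L^\infty} \leq e^t \norm{u_0}{L^\infty}$, while integrating the equation in $x$ and using $\norm{K' * u^\eps}{L^1} \leq \norm{u^\eps}{L^1}$ (both from Lemma \ref{prop:K-bounds}) yields $\norm{u^\eps(t)}{L^1} \leq e^t \norm{u_0}{L^1}$. For the one-sided Lipschitz bound I differentiate the equation in $x$, obtaining for $w^\eps := \d_x u^\eps$
\begin{equation}
   \d_t w^\eps + u^\eps \d_x w^\eps + (w^\eps)^2 - \eps\, \d_x^2 w^\eps = - \d_x (K' * u^\eps),
\end{equation}
and invoke the pointwise bound $|\d_x(K' * u^\eps)| \leq 2 \norm{u^\eps}{L^\infty}$ from Lemma \ref{prop:K-bounds}, together with the $L^1 \to L^\infty$ smoothing $\norm{K * v}{L^\infty} \leq \tfrac{1}{2}\norm{v}{L^1}$, to control the right-hand side in terms of the already-established $\norm{u^\eps(t)}{L^1}$ bound. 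A parabolic maximum-principle argument applied to $M^\eps(t) := \sup_x w^\eps(x,t)$ then yields a Riccati-type differential inequality $(M^\eps)'(t) \leq -(M^\eps(t))^2 + C(t)$ with $C(t) \lesssim 1 + t\,e^t \norm{u_0}{L^1}$ after reorganisation, and comparison with the pure Burgers profile $1/t$ produces precisely the right-hand side in \eqref{Oleinik}. The Oleinik estimate furnishes local BV bounds uniform in $\eps$, so Helly's selection theorem together with the $L^1$ and $L^\infty$ bounds supplies a subsequence converging in $L^1_\loc$ to a limit $u \in C([0,\infty[, L^1(\R))$; standard lower-semicontinuity when passing to the limit in the Kruzhkov entropy inequality (the viscous contribution has the favourable sign after testing against $|u^\eps - \la|$) then yields a weak entropy solution still satisfying \eqref{Oleinik}.

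For uniqueness and \eqref{L1stab} I employ Kruzhkov doubling. Given weak entropy solutions $u, v$ with initial data $u_0, v_0$, I insert $\la = v(y,s)$ into the entropy inequality for $u$ in variables $(x,t)$ and symmetrically for $v$, then test against $\phi(x,t,y,s) = \psi\bigl(\tfrac{x+y}{2}, \tfrac{t+s}{2}\bigr) \rho_\delta(x-y) \rho_\delta(t-s)$; adding the two inequalities and sending $\delta \to 0$ gives the distributional Kato-type bound
\begin{equation}
   \d_t |u-v| + \d_x\!\left( \sgn(u-v) \tfrac{u^2-v^2}{2}\right) + \sgn(u-v)\, K' * (u-v) \leq 0.
\end{equation}
Integrating in $x$ removes the flux divergence, and Young's inequality yields $\norm{K' * (u-v)}{L^1} \leq \norm{K'}{L^1} \norm{u-v}{L^1} = \norm{u-v}{L^1}$, so Gronwall's lemma produces \eqref{L1stab}; uniqueness is the special case $u_0 = v_0$. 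The hardest parts are expected at two places. First, in the Oleinik step the precise quantitative constant in \eqref{Oleinik} requires careful chaining of three separate bounds from Lemma \ref{prop:K-bounds} through successive Gronwall estimates, and keeping track of sharp constants is delicate. Second, in the doubling step the nonlocal source $\sgn(u(x,t) - v(y,s))(K'*u)(x,t)$ paired with its symmetric partner must be shown to converge, as $\delta \to 0$, to exactly $\sgn(u-v)\, K' * (u-v)$; this uses the bilinearity of convolution and the $L^1$-continuity of $w \mapsto K' * w$, and is precisely the feature that reduces the nonlocal term to a harmless linear $L^1$ perturbation of the Burgers contraction semigroup rather than an obstruction to the comparison principle.
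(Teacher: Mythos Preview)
Your overall strategy is sound, but it is \emph{not} the route taken in the paper. The paper does not give a self-contained proof of Theorem~\ref{GNThm}; it attributes the result to \cite{GNg:16} and explicitly describes that proof as a \emph{flux-splitting} scheme: one discretizes time, alternates exact Burgers-semigroup steps with linear nonlocal transport steps $u_t = -K'*u$, and extracts a limit via tightness and H\"older estimates on characteristics. Your proposal instead follows the \emph{vanishing-viscosity} path used in \cite{FS2004} for $BV$ data and in \cite{LL2020}, so your opening sentence misattributes the method; \cite{GNg:16} does not argue via a parabolic regularization. What your approach buys is a single continuous approximation instead of a two-stage iteration; what the flux-splitting buys is that the Burgers step automatically produces the $1/t$ Oleinik contribution without any Riccati analysis, and the linear step contributes only a controlled increment.

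There is a genuine gap in your Oleinik argument. You claim a Riccati inequality $(M^\eps)' \leq -(M^\eps)^2 + C(t)$ with $C(t) \lesssim 1 + t e^t \norm{u_0}{L^1}$, but the bounds you actually invoke do not give this. The source in the equation for $w^\eps = \d_x u^\eps$ is $-\d_x(K'*u^\eps) = u^\eps - K*u^\eps$, so at the spatial maximum of $w^\eps$ one obtains
\[
   (M^\eps)' \leq -(M^\eps)^2 + \norm{u^\eps(t)}{L^\infty} + \tfrac{1}{2}\norm{u^\eps(t)}{L^1},
\]
which depends on $\norm{u^\eps(t)}{L^\infty}$, not only on $\norm{u_0}{L^1}$. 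The estimate \eqref{derivativeconvolutionbound} you quote is an $L^\infty$ bound, not an $L^1$ one, and your separate $L^1\to L^\infty$ bound for $K*$ handles only the $K*u^\eps$ piece. To recover a constant depending solely on $\norm{u_0}{L^1}$ one needs an additional ingredient---for instance, the elementary fact that a one-sided Lipschitz bound $w \leq C$ together with $\norm{u}{L^1} \leq L$ forces $\norm{u}{L^\infty} \leq \sqrt{2CL}$, fed back into the Riccati inequality as a bootstrap. This closes, but it is not the ``reorganisation'' you describe, and the precise numerical form of \eqref{Oleinik} is unlikely to fall out of the viscous Riccati route; that constant is tailored to the flux-splitting iteration in \cite{GNg:16}.

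A smaller point: Helly plus local $BV$ bounds gives $L^1_\loc$ compactness in $x$ at fixed $t$, but the assertion $u \in C([0,\infty[, L^1(\R))$ needs a separate time-equicontinuity or tightness argument (this is exactly where \cite{GNg:16} invokes H\"older estimates on characteristics). Your Kruzhkov doubling argument for \eqref{L1stab} is correct as written; the nonlocal term indeed collapses to $\sgn(u-v)\,K'*(u-v)$ in the $\de\to 0$ limit and contributes at most $\norm{u-v}{L^1}$ after integration, yielding the $e^t$ factor via Gronwall.
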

Of course, as a corollary of \eqref{L1stab} (with $v_0 = 0$) we obtain the following estimate for every $t \geq 0$:
$$
    \norm{u(t)}{L^1} \leq e^t \norm{u_0}{L^1}.
$$

The proof of Theorem \ref{GNThm} is by a so-called \emph{flux-splitting method} and uses approximate solutions based on discretized time steps and the solution semigroup for the Burgers equation applied in each interval between these time steps. Convergence is shown by fine techniques involving estimates for the Burgers semigroup and regularity properties of solutions to the Poisson equation. Continuity of $u$ as a function into $L^1$ follows from a tightness condition of the approximate solution sequence, which is shown via energy estimates establishing H\"older regularity of the characteristics along the way. Boundedness of weak entropy solutions combined with the $L^1$ continuity of $u$ with respect to time produces an integral inequality, which implies $L^1$-stability and hence also uniqueness.

The recent publication \cite{LL2020} states results partially parallel to Theorem \ref{GNThm} and independently sketches arguments based on vanishing viscosity solutions and compensated compactness. The solutions obtained are in coherence with the current setting, although the solution concepts given in \cite[Definitions 2.6 and 2.7]{LL2020} fail to clarify details about the initial data and neither the definition of weak entropy solutions nor the main existence theorem \cite[Theorem 3.6]{LL2020} include continuity aspects of the solution with respect to time.

Well-posedness and $L^1$-stability for \emph{periodic weak entropy solutions to the Fornberg-Whitham equation} has been shown independently in \cite[Section 2]{HO:19} along the lines of Kru\v{z}kov's original paper \cite{Kruzkov:70} and with an adaptation of an older technique by Fujita and Kato for the Navier-Stokes equation based on the analytic semigroup generated by $- \eps \d_x^2$ on $L^2(\T)$.

\begin{remark} We note that \cite[Theorem 1]{GLC2018} contains an $L^1$-stability statement analogous to \eqref{L1stab}, although for strong solutions and only for times of their common existence.  There are also the following bounds for the spatial $L^\infty$ norm of a strong solution $u$ with existence time $T > 0$ and initial value $u_0 \in H^s(\R)$ ($s > 3/2$), given in \cite[Lemma 4]{GLC2018}, 
$$
   \| (K * u_x)(t)\|_{L^\infty} \leq \| u_0\|_{L^2}  \quad\text{and}\quad \|u(t)\|_{L^\infty} \leq \|u_0\|_{L^\infty} + t \|u_0\|_{L^2}.
$$
\end{remark}

\subsection{Mild solutions} Our goal here is to establish mild solutions and also the well-posedness of weak entropy solutions via the generation of a non-linear semigroup. The basic properties of the non-local term according to Lemma \ref{prop:K-bounds} allow us to see the following theorem almost as a direct application of the theory of semigroups on Banach spaces generated by non-linear operators (as described, e.g., in \cite{Barbu}).

\begin{theorem}\label{semisol} If $u_0 \in L^1(\R)$ then there exists a unique global mild solution $u \in C([0,\infty[, L^1(\R))$ of the Cauchy problem \eqref{eqn:F-W} and \eqref{ic:F-W} in the sense of semigroups. 
\end{theorem}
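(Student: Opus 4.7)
The plan is to exhibit $A+B$ as a quasi-m-accretive perturbation of the m-accretive Burgers generator $A$ and to invoke the Crandall-Liggett-B\'enilan generation theorem. By Lemma \ref{prop:K-bounds}, the non-local term $Bu = K'*u$ defines a bounded linear operator on $L^1(\R)$ of norm at most $\om := \norm{K'}{L^1}$, hence globally Lipschitz with the same constant. Meanwhile, the Burgers generator $A$ is m-accretive on $L^1(\R)$ with dense domain $D(A)$ by the classical theory recalled in \cite[Section 6.4]{Dafermos2016} and \cite[Section 5.5]{Barbu}. Setting $u(t) := S(t)(u_0)$, where $(S(t))_{t \ge 0}$ is the semigroup generated by $A+B$, would then provide the desired mild solution in the sense of the definition preceding the theorem.

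First, I would check quasi-accretivity of $A+B$ with constant $\om$: for any $(u_i,v_i) \in A$ ($i=1,2$) and $0 < \la < 1/\om$, the triangle inequality together with accretivity of $A$ and the norm bound on $B$ gives
\begin{align*}
  \norm{u_1 - u_2 + \la\bigl((v_1 + Bu_1) - (v_2 + Bu_2)\bigr)}{L^1}
  &\geq \norm{u_1 - u_2 + \la(v_1 - v_2)}{L^1} - \la\norm{B(u_1-u_2)}{L^1} \\
  &\geq (1 - \la\om)\norm{u_1-u_2}{L^1}.
\end{align*}

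Second, I would verify the range condition $R(I+\la(A+B)) = L^1(\R)$ for $0 < \la < 1/\om$; together with the previous step this yields that $A + B + \om I$ is m-accretive, i.e., $A+B$ is quasi-m-accretive. Given $f \in L^1(\R)$, I rewrite the inclusion $u + \la A u + \la B u \ni f$ as the fixed-point problem
\[
   u = (I+\la A)^{-1}(f - \la B u) =: T(u).
\]
Since $A$ is m-accretive, $(I+\la A)^{-1}$ is defined on all of $L^1(\R)$, takes values in $D(A)$, and is a contraction. Therefore $\norm{T(u_1)-T(u_2)}{L^1} \leq \la\om \norm{u_1-u_2}{L^1}$, so for $\la < 1/\om$ the map $T$ is a strict contraction and Banach's fixed-point theorem produces a unique $u \in D(A)$ with $u + \la(Au + Bu) = f$, establishing surjectivity of $I + \la(A+B)$.

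Finally, the Crandall-Liggett-B\'enilan generation theorem (as formulated in \cite[Chapter 4]{Barbu}) yields a continuous semigroup $(S(t))_{t \ge 0}$ on $\ovl{D(A+B)} = \ovl{D(A)} = L^1(\R)$ satisfying the quasi-contraction estimate $\norm{S(t)(u_0) - S(t)(v_0)}{L^1} \leq e^{\om t}\norm{u_0-v_0}{L^1}$, from which both existence for arbitrary $u_0 \in L^1(\R)$ and uniqueness of the mild solution follow immediately. The step I expect to be the main obstacle is verifying the range condition: while the fixed-point argument above is short, it relies critically on the non-trivial input that $A$ is genuinely m-accretive (not merely accretive) on $L^1(\R)$, and one must confirm that the domain $D(A)$ is dense in $L^1(\R)$ so that the generated semigroup extends uniquely to all initial data in $L^1(\R)$.
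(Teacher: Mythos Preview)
Your proposal is correct and follows essentially the same route as the paper: show quasi-accretivity of $A+B$ via the triangle inequality and the $L^1$-bound on $B$, then establish the range condition $R(I+\la(A+B))=L^1(\R)$ by rewriting it as a contractive fixed-point problem using the resolvent of the m-accretive Burgers operator $A$, and finally invoke the Crandall--Liggett generation theorem on $\ovl{D(A)}=L^1(\R)$. Your fixed-point formulation $u=(I+\la A)^{-1}(f-\la Bu)$ is in fact cleaner than the paper's displayed version, which inadvertently splits the nonlinear resolvent as if it were linear; the underlying argument and contraction estimate are identical.
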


\begin{proof}
  As the non-local term is bounded in $L^1$, and as the inviscid Burgers term generates a non-linear contraction semigroup in $L^1$, we may consider the former a perturbation of the latter and apply the theory of semigroups generated from nonlinear operators. More precisely, let $A$ be the non-linear operator associated with the Burgers equation as in \cite[Section 3.3]{Barbu} or in \cite[Section 6.4]{Dafermos2016}, and put $Bu  := K' * u$ with domain $D(B) = L^1(\R)$. By Proposition \ref{prop:K-bounds} we have the finite operator norm $b:= \|B\| < \infty$ for $B$ as linear map  $L^1(\R) \to L^1(\R)$. We claim that $A+B$ is quasi-m-accretive on $L^1(\R)$ (in the sense of \cite[Section 3.1]{Barbu}).

  To establish this, we first note that from the accretiveness of A and boundedness of B, we have for $v_1, v_2 \in L^1(\R)$ and $0 < \lambda < 1/b$,
\begin{multline*}
    \|v_1 - v_2 + \lambda ( A(v_1) + B v_1 - A(v_2) - B v_2)\|_{1} \\  \geq
     \|v_1 - v_2 + \lambda ( A(v_1) - A(v_2))\|_{1} - \lambda \| B (v_1 - v_2) \|_{1}
    \geq (1 - \lambda b) \|v_1 - v_2\|_{1},
\end{multline*}
hence A + B is quasi-accretive.

Second, appealing to \cite[Proposition 3.3]{Barbu}, the accretive operator $A+B$ is quasi-m-accretive, if we can show surjectivity of $I + \lambda (A + B)$ for small $\lambda > 0$, e.g.\ by proving solvability of the following equation in $L^1(\R)$ for $u$ given $v$:
\[
  (I + \lambda A)^{-1} (v) - (I + \lambda A)^{-1} (\lambda B u) =   u.
\]
As long as $\lambda < 1/b$, the left-hand side is a contraction, since by accretiveness of $A$,
$$
   \| (I + \lambda A)^{-1} (\lambda B u_1) - (I + \lambda A)^{-1} (\lambda B u_2) \|_{1}  \leq
    \| \lambda B u_1 -\lambda B u_2 \|_{1} \leq \lambda b \| u_1 - u_2 \|_{1}
$$
and hence the equation is solvable. 

For the quasi-m-accretive operator $A+B$ we have by \cite[Proposition 3.6]{Barbu} that its domain $D(A+B)$ is dense in $L^1(\R)$. Therefore, the existence and uniqueness of a mild solution in the sense of non-linear semigroups with initial value $u_0 \in \overline{D(A+B)} = L^1(\R)$ now follows from \cite[Corollary 4.1]{Barbu}. 
\end{proof}

\begin{remark} 
In course of the above proof we preferred to show directly that of $A+B$ is quasi-m-accretive, while alternatively, one could also just observe quasi-accretiveness of $B$ and apply an appropriate variant of the basic perturbation result proved in \cite[Theorem 3.2]{Barbu1976} (and mentioned also in \cite[Theorem 3.1]{Barbu}).
\end{remark}

The following result gives an independent proof of the well-posedness part from Theorem \ref{GNThm}.
\begin{theorem}\label{entropysol} If $u_0 \in L^1(\R) \cap L^\infty(\R)$, then the global mild solution $u \in C([0,\infty[, L^1(\R))$ of the Cauchy problem \eqref{eqn:F-W} and \eqref{ic:F-W} according to Theorem \ref{semisol} is also a weak entropy solution.
\end{theorem}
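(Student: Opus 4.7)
The plan is to realise the mild solution from Theorem~\ref{semisol} as the limit of a Lie--Trotter splitting scheme that alternates between the inviscid Burgers flow (which automatically yields Kru\v{z}kov entropy inequalities) and the bounded non-local linear flow generated by $B u := K' * u$. Fix $T > 0$ and $u_0 \in L^1(\R) \cap L^\infty(\R)$. For a step size $h > 0$, I would define $u_h$ inductively on $[nh,(n+1)h]$ by first evolving by the Burgers entropy semigroup $S_A$ from $u_h(nh)$ up to time $(n+\tfrac12)h$, and then evolving by $\dot w = -K' * w$ from $(n+\tfrac12)h$ to $(n+1)h$. Since $S_A$ preserves $L^1 \cap L^\infty$ (contracting in $L^\infty$ on the Kru\v{z}kov class) and $B$ is bounded on both $L^1(\R)$ and $L^\infty(\R)$ with norm at most $2$ by Lemma~\ref{prop:K-bounds}, an inductive Gronwall estimate produces the uniform bounds $\norm{u_h(t)}{L^1} \leq e^{2t}\norm{u_0}{L^1}$ and $\norm{u_h(t)}{L^\infty} \leq e^{2t}\norm{u_0}{L^\infty}$, independent of $h$.

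Next I would show that $u_h \to u$ in $C([0,T],L^1(\R))$ as $h \to 0$, where $u$ is the mild solution of Theorem~\ref{semisol}. This is the Chernoff/Trotter--Kato-type convergence underlying the Crandall--Liggett formula: the scheme corresponds to composing the semigroups generated by $A$ and by the bounded perturbation $B$, whose sum is the quasi-m-accretive generator $A + B$ identified in the proof of Theorem~\ref{semisol}, so density of $D(A+B)$ in $L^1(\R)$ extends convergence to every initial datum in $L^1$.

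On every Burgers half-step the standard Kru\v{z}kov inequality
\begin{equation*}
\partial_t |u_h - \lambda| + \partial_x\Bigl(\sgn(u_h - \lambda)\tfrac{u_h^2 - \lambda^2}{2}\Bigr) \leq 0
\end{equation*}
holds in $\D'$, while on every linear half-step the equation $\partial_t u_h + K' * u_h = 0$ has $C^1$-in-time solutions, so for smooth strictly convex approximants $\eta_\eps$ of $z \mapsto |z - \lambda|$ with fluxes $Q_\eps$ the chain rule gives $\partial_t \eta_\eps(u_h - \lambda) = -\eta_\eps'(u_h - \lambda)\,(K' * u_h)$ pointwise. Testing against an arbitrary non-negative $\phi \in \D(\R \times\, ]0,\infty[)$, telescoping over the half-steps in $[0,T]$, and then sending $\eps \to 0$ produces an approximate inequality for $u_h$ that differs from \eqref{eqn:entropy-sol2} only by a splitting error of order $O(h)$, uniform in $\lambda$ thanks to $|\eta_\eps'| \leq 1$ and the uniform $L^\infty$ bound on $u_h$.

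The passage to the limit $h \to 0$ uses strong convergence $u_h \to u$ in $C([0,T],L^1(\R))$ together with the uniform $L^\infty$ bound: this yields $|u_h - \lambda| \to |u - \lambda|$ and $\sgn(u_h - \lambda)(u_h^2 - \lambda^2)/2 \to \sgn(u - \lambda)(u^2 - \lambda^2)/2$ in $L^1_{\loc}(\R \times [0,T])$, while $K' * u_h(\cdot,t) \to K' * u(\cdot,t)$ in $L^1(\R)$ by continuity of convolution with $K'$. These three ingredients are enough to recover \eqref{eqn:entropy-sol2}. The main obstacle is the third step: quantifying the splitting error distributionally and uniformly in the Kru\v{z}kov parameter $\lambda$. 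The safest route is to carry out the estimate throughout with the smooth entropies $\eta_\eps$ (where genuine chain-rule calculus is legitimate on the linear half-step) and to exchange limits in the order $h \to 0$ first and $\eps \to 0$ afterwards, so that the non-smooth composition $\sgn(\cdot - \lambda)$ is only invoked once the splitting has been disposed of.
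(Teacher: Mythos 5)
Your route is genuinely different from the paper's. The paper derives the entropy inequality from the \emph{implicit} (backward Euler) discretization of the full operator $A+B$: it uses the Crandall--Liggett resolvent formula $u(t)=\lim_n (I+\tfrac{t}{n}(A+B))^{-n}u_0$, establishes uniform $L^p$ bounds ($1\le p\le\infty$) for the solutions of the $\eps$-regularized difference equation, and then follows Barbu's proof of the equivalence of mild and entropy solutions with $A+B$ in place of $A$. Your Lie--Trotter splitting is instead essentially the flux-splitting strategy of \cite{GNg:16} (mentioned in the discussion of Theorem \ref{GNThm}); it has the advantage of making the entropy inequality visible step by step, at the price of having to prove convergence of the product scheme to the semigroup generated by $A+B$. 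On that point, note that this convergence is \emph{not} ``the Chernoff/Trotter--Kato-type convergence underlying the Crandall--Liggett formula'': the Crandall--Liggett formula is the resolvent iteration, not a product formula. For a nonlinear Trotter product $\big(S_B(h)\circ S_A(h)\big)^{n}u_0\to S_{A+B}(t)u_0$ you need a separate result (Brezis--Pazy / Kato product formulas for accretive operators, which do apply here since $B$ is linear, bounded and $A+B$ is quasi-m-accretive), and this must be cited or proved.

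The more serious gap is in the telescoping step. As you set up the scheme, the Burgers dynamics acts only on the sub-intervals $I_B^n=[nh,(n+\tfrac12)h]$ and the linear dynamics only on $I_L^n=[(n+\tfrac12)h,(n+1)h]$. Integrating the Kru\v{z}kov inequality on the $I_B^n$ and the exact identity $\partial_t\eta_\eps(u_h)=-\eta_\eps'(u_h)(K'*u_h)$ on the $I_L^n$ against $\phi$ and summing (the boundary terms cancel by time-continuity of $u_h$), you obtain
\begin{equation}
 \int_0^\infty\!\!\int \eta_\eps(u_h)\,\partial_t\phi\,dx\,dt
 +\sum_n\int_{I_B^n}\!\!\int Q_\eps(u_h)\,\partial_x\phi\,dx\,dt
 -\sum_n\int_{I_L^n}\!\!\int \eta_\eps'(u_h)\,(K'*u_h)\,\phi\,dx\,dt \;\ge\; 0,
\end{equation}
in which the flux term is integrated over only half of the time axis and the non-local term over the other half. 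The discrepancy from \eqref{eqn:entropy-sol2} is therefore $O(1)$, not $O(h)$: each missing half-step contributes $O(h)$ and there are $O(1/h)$ of them. The standard repair is to run each sub-flow at double speed (so that $S_A$ acting for physical duration $h/2$ realises a Burgers step of length $h$, and likewise for $S_B$); this inserts a factor $2$ in front of both sums, and one then shows $2\sum_n\int_{I_B^n} g(t)\,dt\to\int_0^\infty g(t)\,dt$ by an equidistribution (Riemann-sum) argument using equicontinuity in $t$ of the integrands, which follows from the uniform $L^1$-Lipschitz-in-time bounds for the scheme. Without this rescaling and limit argument the passage to \eqref{eqn:entropy-sol2} fails. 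The remaining ingredients of your sketch (uniform $L^1\cap L^\infty$ bounds via Lemma \ref{prop:K-bounds}, the chain rule on the linear half-steps with smooth entropies $\eta_\eps$, and the order of limits $h\to0$ before $\eps\to0$) are sound.
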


\begin{proof} Assuming now that $u_0 \in L^1(\R) \cap L^\infty(\R)$, the equivalence of the semigroup solution according to Theorem \ref{semisol} and the entropy solution follows similarly as in the proof of \cite[Theorem 5.6]{Barbu}. We indicate a few adaptations implementing the proof variant in our case: First, since we have shown that the range of $I + \lambda (A + B)$ for small $\lambda > 0$ is all of $L^1(\R)$,  we may note that following  \cite[Theorem 4.3, Equation (4.17)]{Barbu}, the mild solution can be constructed as the limit of resolvents
\begin{displaymath}
  u(t) = \lim_{n \to \infty} \big(I + \frac{t}{n}(A+B)\big)^{-n} u_0
\end{displaymath}
uniformly in $t$ on compact intervals. Second, the resolvent-like bounds established above hold (with an appropriately changed constant $b > 0$) with respect to any $L^p$-norm, $1 \leq p \leq \infty$, since this is true for the unperturbed operator $A$, the convolution operator $B$ is bounded on $L^p(\R)$ as well (Lemma \ref{prop:K-bounds}), and the above estimates for $A+B$ were generic, i.e., without using special properties of the $L^1$-norm. In combination of these facts, it follows  that the solutions $u_\varepsilon$ to the $\varepsilon$-regularized difference equation as in \cite[Equation (5.125)]{Barbu}, namely $(u_\eps(t) - u_\eps(t - \eps))/\eps + A(u_\eps(t)) + B u_\eps(t) = 0$ for $t > \eps$ and $u_\eps(t) = u_0$ for $t < 0$, satisfy  
\begin{displaymath}
  \|u_\varepsilon(t)\|_p \leq e^{bt}\|u_0\|_p,
\end{displaymath}
uniformly in $\varepsilon > 0$, and $u_\eps(t) \to u(t)$ as $\eps \to 0$, uniformly for $t$ in a compact time interval. This uniform upper bound for $u_\eps$ allows us to enter the proof of \cite[Theorem 5.6]{Barbu} at (5.128) and follow the line of arguments there up to the end with $A+B$ always replacing $A$, which concludes the proof of our theorem.
\end{proof}

\subsection{Continuous weak traveling wave solutions}

In the theory of (local) scalar conservation laws it can be shown that continuous weak solutions are always entropy solutions (\cite[Theorem 11.13.1]{Dafermos2016}). The proof employs fine-tuned techniques from the theory of generalized characteristics and might to be out of reach in our case of a nonlocal conservation law with the Fornberg-Whitham equation. However,  for the special situation of traveling waves we show a related result below. Its hypotheis includes the case of the famous peakon solution with initial wave profile $4 \exp(-|y|/2)/3$, which gives a weak solution to the Fornberg-Whitham equation (cf.\ \cite{FB78,CLH2012} or \cite[Example 1.4]{GH2018}). In general for a traveling wave $u(x,t) = v(x - ct)$ we do not want to require $u_0 = v \in L^1(\R)$, since this would exclude many interesting cases. Thus, in the following statement we do not assume that $x \mapsto u(x,t)$ is integrable for every $t$ and resort to Definition \ref{defentropysol} instead of \ref{defentropysol2}.

\begin{proposition}\label{travprop} Any weak traveling wave solution $(x,t) \mapsto u(x,t) = v(x - ct)$ with bounded and absolutely continuous\footnote{In the sense that $v$ is differentiable almost everywhere with locally integrable derivative.} wave profile $v$ is an entropy solution in the sense of Definition \ref{defentropysol}. 
\end{proposition}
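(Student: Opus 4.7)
The plan is to reduce both the weak solution identity and the weak entropy inequality to one-dimensional statements in the profile variable $y = x - ct$ by a translation change of variables, and then to show that the entropy integral collapses to zero thanks to an ODE for $v$ derived from the weak solution hypothesis and the chain rule for absolutely continuous functions.

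First I would substitute $u(x,t) = v(x - ct)$ into the weak solution identity \eqref{weaksol} and change variables via $y = x - ct$, $s = t$, writing $\psi(y,s) := \phi(y+cs, s)$, so that $\partial_x\phi = \partial_y\psi$ and $\partial_t\phi = \partial_s\psi - c\,\partial_y\psi$. Since $v$ is $s$-independent, the $\partial_s\psi$ contribution integrates to $-\int v(y)\psi(y,0)\, dy$, cancelling the initial-value term. What remains reads
$$
   \iint \Bigl[\bigl(\tfrac{v^2}{2} - cv\bigr)(y)\,\partial_y\psi(y,s) - (K'*v)(y)\,\psi(y,s)\Bigr]\, dy\, ds = 0
$$
for every $\psi \in \D(\R^2)$, from which (by factoring $\psi = \chi(y)\eta(s)$) one deduces the distributional identity $\tfrac{d}{dy}(\tfrac{v^2}{2} - cv) = -(K'*v)$ on $\R$. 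Since $K' \in L^1(\R)$ and $v \in L^\infty(\R)$, the right-hand side is continuous by Lemma \ref{prop:K-bounds}, so $\tfrac{v^2}{2} - cv$ is $C^1$; combined with the assumed absolute continuity of $v$, the chain rule then yields the almost-everywhere pointwise ODE
$$
    (v(y) - c)\,v'(y) = -(K'*v)(y).
$$

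Next, performing the identical change of variables in \eqref{eqn:entropy-sol}, the $\partial_t\phi$ piece again cancels the boundary term, and the problem is reduced to showing
$$
   \iint \bigl[F(y)\,\partial_y\psi - \sgn(v-\lambda)\,(K'*v)(y)\,\psi\bigr]\, dy\, ds \;\geq\; 0
$$
for every nonnegative $\psi \in \D(\R^2)$, where $F(y) := |v(y)-\lambda|\bigl(\tfrac{v(y)+\lambda}{2} - c\bigr)$. Because $z \mapsto |z-\lambda|$ is Lipschitz and $v$ is bounded and absolutely continuous, $F$ is absolutely continuous as well, so integration by parts in $y$ (against the compactly supported $\psi$) is legitimate. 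Using the Stampacchia-type fact that $v' = 0$ a.e.\ on $\{v = \lambda\}$, the chain rule furnishes
$$
   F'(y) = \sgn(v(y) - \lambda)\,(v(y)-c)\,v'(y) \quad \text{a.e.}
$$
Combining this with the ODE from the first step gives $F'(y) = -\sgn(v(y)-\lambda)\,(K'*v)(y)$ a.e., and integration by parts now collapses the reduced integral to zero, which is in particular nonnegative, establishing the entropy inequality.

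The only delicate step is the chain-rule identity for $F'$: on the level set $\{v = \lambda\}$, the factor $\sgn(v-\lambda)$ is ambiguous, but it is precisely there that $v'$ vanishes almost everywhere by the Stampacchia property for absolutely continuous functions, so the ambiguity is harmless. Everything else consists of routine bookkeeping with the change of variables and integration by parts, noting that the compact support of $\phi$ passes to uniform compact $y$-support of $\psi$ on any finite $s$-interval, so no integrability hypothesis on $v$ itself is needed.
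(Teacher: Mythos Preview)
Your proof is correct and follows essentially the same route as the paper: both derive the profile ODE $(v-c)v' + K'*v = 0$ from the weak solution hypothesis and then show that the entropy integral actually vanishes (not merely is nonnegative) by an integration by parts that reduces the integrand to this ODE. The only cosmetic difference is that you first pass to the moving frame $(y,s)=(x-ct,t)$ whereas the paper integrates by parts directly in $(x,t)$; your explicit invocation of the Stampacchia property on $\{v=\lambda\}$ is in fact a touch more careful than the paper's one-line chain-rule remark.
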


\begin{proof} From the assumption that $u(x,t) = v(x - ct)$ defines a weak solution it is not difficult\footnote{For example, along the lines of the reasoning in \cite[Subsection 2.1]{GH2018}, but here with the simplification of continuity of the wave profile at $\xi = 0$.} to derive the following equation, which holds in the sense of distributions as well as pointwise almost everywhere on $\R$:
\begin{equation}\label{distrv}
   \left( \frac{(v - c)^2}{2} +  K * v \right)' = 0.
\end{equation} 
We will show that for any $\lambda \in \R$ and nonnegative test function $\phi$ in $C^\infty_\text{c}(\R^2)$,
 \begin{multline*} 
     \int\limits_{0}^{\infty} \int\limits_{-\infty}^{\infty} |v(x - ct) - \lambda| \partial_t \phi(x,t) \,dx dt + 
     \int\limits_{0}^{\infty} \int\limits_{-\infty}^{\infty} 
           \sgn( v(x - ct)-\lambda) \frac{v^2(x - ct) - \lambda^2}{2}\partial_x \phi(x,t) \,dx dt\\
    - \int\limits_{0}^{\infty} \int\limits_{-\infty}^{\infty} 
       \sgn(v(x - ct)-\lambda)K'*(v(\cdot - ct)) (x) \phi(x,t) \,dx dt + 
    \int\limits_{-\infty}^{\infty} |v(x) - \lambda|\phi(x,0)\,dx = 0.
  \end{multline*}  
Let us denote the four integral terms on the left-hand side by $I_1, I_2, I_3, I_4$, respectively, i.e., we claim that $I_1 + I_2 - I_3 + I_4 = 0$. 

Fubini's theorem and integrating by parts with respect to $t$, gives
 \begin{multline*} 
     I_1 = \int\limits_{-\infty}^{\infty}  \int\limits_{0}^{\infty}  
     |v(x - ct) - \lambda| \partial_t \phi(x,t) \,dt dx\\
     = \int\limits_{-\infty}^{\infty} \Big(  \int\limits_{0}^{\infty} 
      \Big(\sgn(v(x - ct) - \lambda) c v'(x - ct) \phi(x,t) \Big) \,dt +  
       |v(x - ct) - \lambda| \phi(x,t) |_{t=0}^{t = \infty}  \Big)dx\\
    = \int\limits_{-\infty}^{\infty}  \int\limits_{0}^{\infty}  \Big(\sgn(v(x - ct) - \lambda) 
           c v'(x - ct) \phi(x,t) \Big) \,dt dx - \int\limits_{-\infty}^{\infty}  |v(x) - \lambda| \phi(x,0)  \,dx,
  \end{multline*}
where we already observe that the last term cancels $I_4$.
  
In $I_2$  we observe that $f(y) = \sgn(y - \lambda) (y^2 - \lambda^2)/2$ is differentiable with derivative $f'(y) = \sgn(y - \lambda) y$ in an integration by parts to obtain
\begin{multline*}
  I_2 = \int\limits_{0}^{\infty} \int\limits_{-\infty}^{\infty} 
           \sgn( v(x - ct)-\lambda) \frac{v^2(x - ct) - \lambda^2}{2}\partial_x \phi(x,t) \,dx dt\\
    = -  \int\limits_{0}^{\infty} \int\limits_{-\infty}^{\infty} 
           \sgn( v(x - ct)-\lambda) v(x - ct) v'(x - ct) \phi(x,t) \,dx dt.
\end{multline*}
Summing up, we find
\begin{multline*}
   I_1 + I_2 - I_3 + I_4\\  = 
    \int\limits_{0}^{\infty} \int\limits_{-\infty}^{\infty} 
           \sgn( v(x - ct)-\lambda) \underbrace{\Big( (c - v(x - ct)) v'(x - ct) - (K' * v) (x - ct) \Big)}_{= - ((v-c)^2/2)' - K' * v = 0 \text{ a.e.}} \phi(x,t) \,dx dt = 0.
\end{multline*}
\end{proof}

The hypothesis of absolute continuity in the previous proposition certainly would allow for non-smoothness in $v$ harsher  than the Lipschitz continuous corner singularity in the example of the peakon solution.  An interesting question is whether absolutely continuous functions $v$ with a cusp at some location $x_0 \in \R$, where the derivative is locally integrable but unbounded, qualify as initial values of weak traveling solutions $u$. If we have, in addition, $v \in L^1(\R) \cap L^\infty(\R)$ this can be ruled out immediately: The proof of Proposition \ref{travprop} shows that $u$ would be also a weak entropy solution in the sense of Definition \ref{defentropysol2} and we could construct a contradiction to the Oleinik type estimate \eqref{Oleinik} in Theorem \ref{GNThm} for any $t > 0$ near the translated cusp location $x_0 + c t$.

\begin{example}  In  \cite{CLH2012} the authors construct an interesting class of examples of bounded continuous traveling waves with a cusp and satisfying Equation \eqref{eqn:F-W} in the pointwise classical sense everywhere on $\R^2$ except for the straight line $x = ct$. 
We consider the particular case with parameters $A=0$ and $c > 4/3$ in \cite[Theorem 2.4(i), Theorem 2.5(iii), and Case III in Section 3]{CLH2012} and obtain the traveling wave $u(x,t) := v(x - ct)$, where $v$ is a bounded continuous function on $\R$ that is $C^3$ off $0$ and satisfies
\begin{equation}\label{travwaveode}
   \left(1 - \frac{d^2}{d\xi^2}\right) \left( \frac{(v - c)^2}{2}\right)' +  v' = 0 \quad \text{ on } \R \setminus\{ 0\}.
\end{equation}
Furthermore,  $0 < v \leq c$, $v(0) = c$, $\xi \mapsto v(\xi)$ is strictly increasing for $\xi < 0$, $v(-\xi) = v(\xi)$, $\lim_{\xi \to \pm \infty} v(\xi) = 0$, and we have, with the constant $b := 4 |c|^{3/2} \sqrt{c - 4/3}> 0$,
\begin{equation}\label{asymptv}
   v(\xi) = c - 2 b |\xi|^{1/2} + O(|\xi|) \quad\text{and}\quad v'(\xi) = - b \sgn(\xi) |\xi|^{-1/2} + O(1)  \quad (\xi \to 0).
\end{equation}
Note that $v$ is absolutely continuous. In fact, an inspection of the change of variables in the construction of \cite[Section 3, Case III]{CLH2012} shows that we have even $v \in W^{1,1}(\R) \subset BV(\R) \subset L^1(\R) \cap L^\infty(\R)$. Therefore, the argument presented above already shows that $u$ cannot be a weak solution.

However, let us add here also a more direct reasoning why Equation \eqref{travwaveode}, valid pointwise for $\xi \neq 0$,  cannot guarantee that the initial wave profile $v$ defines a global weak solution $u$ of the Fornberg-Whitham equation. Similar arguments may be applicable to other cases of parameters in this example class as well.

We will show that the precise asymptotic information about $v$ and $v'$ near $\xi = 0$ according to \eqref{asymptv} allows us to draw the following conclusion: If $v$ has all the properties specified above and the left-hand side of  \eqref{travwaveode} is the restriction of a distribution on $\R$ which vanishes on $\R\setminus\{0\}$, then
\begin{equation}\label{travwavedistr}
  \left( \frac{(v - c)^2}{2} +  K * v \right)' = - 4 b^2 K'.
\end{equation}
Since Equation \eqref{travwavedistr} is in contradiction to \eqref{distrv}, we may then conclude that $u(x,t) = v (x - ct)$ cannot define a weak solution of the Fornberg-Whitham equation. 
 
To prove \eqref{travwavedistr}, we first note that due to \eqref{travwaveode} the distribution $(1 - \frac{d^2}{d\xi^2}) ((v - c)^2/2)' + v'$ has support in the singleton set $\{ 0\}$, thus equals a finite linear combination of derivatives of the Dirac distribution $\delta$ (concentrated at $\xi = 0$). 
Recall that $v$ is globally continuous, even $C^3$ outside $\xi = 0$, and by \eqref{asymptv} the derivative $v'$ is locally integrable; hence also $((v-c)^2/2)' = (v-c) v'$ is locally integrable. Therefore the order of the Delta derivatives can be at most $1$, i.e., there are $\lambda_0, \lambda_1 \in \R$ such  that
$$
   \left(1 - \frac{d^2}{d\xi^2}\right) \Big(\frac{(v - c)^2}{2}\Big)' + v' = \lambda_0 \delta + \lambda_1 \delta'.
$$
Upon convolution with $K$ we obtain
\begin{equation}
   ((v - c)^2/2)' + K' * v = \lambda_0 K + \lambda_1 K',
\end{equation}
which implies $((v - c)^2/2)' = \lambda_0 K + \lambda_1 K' - K' * v \in L^1(\R)$ and upon integration over $\R$ that
$$
     \lambda_0 = \int_{-\infty}^\infty (\lambda_0 K + \lambda_1 K' - K' * v) d \xi = \int_{-\infty}^\infty ((v - c)^2/2)' d\xi = 0,
$$
since $\lim_{\xi \to \pm \infty} (v(\xi) - c)^2/2 = c^2/2$. Thus, we are left with the equation
\begin{equation}\label{weaktravconv}
   ((v - c)^2/2)' + K' * v = \lambda_1 K'.
\end{equation}
Considering again \eqref{asymptv},  when $\xi \to 0$ we have
\begin{multline*}
   ((v(\xi) - c)^2/2)' =\\ (v(\xi) - c) v'(\xi) = (- 2 b |\xi|^{1/2} + O(|\xi|)) (- b \sgn(\xi) |\xi|^{-1/2} + O(1))\\ = 2 b^2 \sgn(\xi) + O(|\xi|^{1/2}),
\end{multline*} 
hence $((v-c)^2/2)'$ has a jump of height $4 b^2$ at $\xi = 0$. Recalling $K'(\xi) = - \exp(-|\xi|) \sgn(\xi)/2$, using the continuity of $K' * v$ and of $K$ when taking the differences in \eqref{weaktravconv} as $\xi \to \pm0$ we finally conclude that $4 b^2 = - \lambda_1$. 

\end{example}

\begin{remark} It can be shown (cf.\ \cite[Section 3]{FS2004}  or  \cite{GH2018}) that there are bounded, piecewise smooth, traveling waves with an entropic jump discontinuity that are weak entropy solutions in the sense of Definition \ref{defentropysol}. 
\end{remark}

\bigskip

\paragraph{\textbf{Acknowledgments:}} The basic strategy leading to the proofs given here of Theorems \ref{semisol} and \ref{entropysol} was suggested by Alberto Bressan during a visit at the University of Vienna. The author expresses his sincere thanks to him, but also to Ryan Murray for jointly setting up and discussing first details regarding these theorems as well as Lemma \ref{prop:K-bounds}.

\bigskip

\bibliography{FWBP}

\end{document}